\let\oldtocsection=\tocsection
\let\oldtocsubsection=\tocsubsection
\let\oldtocsubsubsection=\tocsubsubsection
\newcommand{\tocsection}[2]{\hspace{0em}\oldtocsection{#1}{#2}}
\newcommand{\tocsubsection}[2]{\hspace{2em}\oldtocsubsection{#1}{#2}}
\newcommand{\tocsubsubsection}[2]{\hspace{3em}\oldtocsubsubsection{#1}{#2}}
\newtheorem{thm}{Theorem}[section]
\newtheorem{cor}[thm]{Corollary}
\newtheorem{lem}[thm]{Lemma}
\newtheorem{prop}[thm]{Proposition}
\newtheorem{ass}[thm]{Assumption}
\newtheorem{rem}[thm]{Remark}
\newtheorem{defn}[thm]{Definition}
\newtheorem*{prop*}{Proposition}
\newtheorem{conj}[thm]{Conjecture}
\newcommand{\bigand}{\hspace{0.5cm}\text{and}\hspace{0.5cm}}
\newcommand{\modu}[1]{\left\lvert #1 \right\rvert}
\newcommand{\paren}[1]{\left( #1 \right)}
\newcommand{\brac}[1]{\left[ #1 \right]}
\newcommand{\curly}[1]{\left \{ #1 \right\}}
\newcommand{\dx}{\hspace{0.5mm}dx}
\newcommand{\N}{\mathbb{N}}
\newcommand{\R}{\mathbb{R}}
\newcommand{\E}{\mathbb{E}}
\renewcommand{\P}{\mathbb{P}}
\newcommand{\X}{\mathcal{X}}
\newcommand{\I}{\mathcal{I}}
\newcommand{\limn}{\lim_{n\to\infty}}
\renewcommand{\epsilon}{\varepsilon}
\newcommand\restr[2]{{
		\left.\kern-\nulldelimiterspace 
		#1 
		\vphantom{\big|} 
		\right|_{#2} 
}}
\newcolumntype{L}{>{$}l<{$}} 
\title{Cutoff in the Bernoulli-Laplace Model With Unequal Colors and Urn Sizes}
\author[1]{Thomas Griffin}
\author[2]{Bailey Hall}
\author[3]{Jackson Hebner}
\author[1]{David Herzog}
\author[4]{Denis Selyuzhitsky}
\author[5]{Kevin Wong}
\author[6]{John Wright}
\affil[1]{Iowa State University}
\affil[2]{Westmont College}
\affil[3]{University of Connecticut}
\affil[4]{The Michigan State University}
\affil[5]{University of California, Los Angeles}
\affil[6]{The Ohio State University}
\begin{document}

\maketitle

\begin{abstract}
We consider a generalization of the Bernoulli-Laplace model in which there are two urns and $n$ total balls, of which $r$ are red and $n - r$ white, and where the left urn holds $m$ balls. At each time increment, $k$ balls are chosen uniformly at random from each urn and then swapped. This system can be used to model phenomena such as gas particle interchange between containers or card shuffling. Under a reasonable set of assumptions, we bound the mixing time of the resulting Markov chain asymptotically in $n$ with cutoff at $\log{n}$ and constant window. Among other techniques, we employ the spectral analysis of \cite{K09} on the Markov transition kernel and the chain coupling tools of \cite{A22} and \cite{N19}. 
\end{abstract}

\tableofcontents

\section{Introduction}

In the classical Bernoulli-Laplace model, there are a total of $n$ balls split evenly between a left and a right urn. Of these balls, ${n \over 2}$ are red and ${n \over 2}$ are white. At each time increment, $k$ balls are selected uniformly at random without replacement from each urn and then swapped between the urns. 

We consider a generalization of this model where the left urn holds $m$ balls and the right urn $n-m$ balls, and where $r$ of the balls are red and $n-r$ of the balls are white. As in the classical case, $k$ balls are selected from both urns uniformly and without replacement to swap. Note that implicitly, $m,r,k$ are sequences in $n$ but we will suppress the reliance. We will let $X_t$ denote the number of red balls in the left urn after $t$ swaps. Transition probabilities for this process are given by 
\begin{equation}
    p(x, y) = \P(x - H_1^x + H_2^x = y)
\end{equation}
where $H_1^x$ and $H_2^x$ are independent hypergeometric random variables corresponding to the number of red balls moved out of the left urn and into the left urn during the swap, respectively. More precisely,
\begin{align}\label{clown}
    H_1^{x} \sim \mathrm{Hyp}(m, x, k) \bigand H_2^{x} \sim \mathrm{Hyp}(n - m, r - x, k).
\end{align}
This process is an irreducible, aperiodic Markov chain on the finite state space $$\mathcal X= \{\max(0,r+m-n), \max(0,r+m-n)+1,...,\min(m,r)\}$$ and so converges to a stationary distribution $\pi$. Given an initial distribution $\mu$ for $X_0$, we define the \textit{total variation distance} between the law of $X_t$ and the stationary distribution by $$\| \mu P^t - \pi \|_{TV} = \sup_{A \subset \X} \modu{\mu P^t(A) - \pi(A)},$$ where $P$ is the Markov transition kernel corresponding to the chain. We may then define the \textit{mixing time at $\varepsilon$} by $$t_{\mathrm{mix}}^{(n)}(\varepsilon) = \sup_{x \in \X} \inf \{t : \|\delta_x P^t - \pi \|_{TV} \leq \varepsilon\},$$ where $\delta_x$ is the point distribution at $x$ (meaning the chain starts at $X_0 = x \in \X$).

We say a sequence of Markov chains \textit{exhibits cutoff} in total variation if
$$
\limn \frac{t^{(n)}_{\mathrm{mix}}(\varepsilon)}{t^{(n)}_{\mathrm{mix}}(1-\varepsilon)} = 1 \hspace{4mm} \text{ for all fixed } \hspace{4mm} \varepsilon \in (0,1).
$$

Further, we say a sequence $a_n$ is a \textit{cutoff window} if there exists a constant $c(\varepsilon)$ for each $\epsilon \in (0,1)$ such that

$$
a_n = o\paren{t^{(n)}_{\mathrm{mix}}\paren{{1 \over 2}}} \bigand \modu{t^{(n)}_{\mathrm{mix}}(\varepsilon) - t^{(n)}_{\mathrm{mix}}(1-\varepsilon)} \le c(\varepsilon) a_n \hspace{4mm} \text{ for all } \hspace{4mm} n.
$$

\vspace{2mm}

This paper analyzes the mixing times of sequences $\{X_t^{(n)}\}_n$ of generalized Bernoulli-Laplace chains. To make this manageable, we introduce the following assumption.

\begin{ass}\label{banana}
    For each $n$, assume without loss of generality that $r,m \leq \frac{n}{2}$. Suppose $\limn {k \over n} = \gamma $, $\limn {m \over n } = h$ and $\limn {r \over n} = \eta$, and that
    \begin{align*}
    0 < \gamma \leq h \leq \frac{1}{2}, \hspace*{4mm}
    0 < \eta \leq {1 \over 2}, \hspace*{4mm}
    \gamma \neq h(1-h), \bigand
    \gamma \neq \frac{1}{2} \text{ if } h = {1 \over 2}.
    \end{align*}
\end{ass}

\begin{rem}
When proving asymptotic results, Assumption \ref{banana} allows us to make the following assumptions without loss of generality
\begin{align*}
    m, n-m \geq 2, \hspace*{4mm}
    k \neq \frac{m(n-m)}{n}, \bigand
    \text{if } k = m  \text{ then } m \neq \frac{n}{2}.
\end{align*}
Further, $$\X = \{0, \dots, \min(m,r)\}.$$
\end{rem}

For completeness, we explicitly provide the stationary distributions of the chains. The proof is simple and follows, for example, from the logic in \cite{T96}.

\begin{prop}
    The stationary distribution of $X_t^{(n)}$ is $\pi^{(n)} \sim \mathrm{Hyp}(n, r, m)$, with probability mass function $$\pi^{(n)}(j) = \frac{ {r\choose j} {n-r \choose m-j}}{{n \choose m}}.$$
\end{prop}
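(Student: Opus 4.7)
The plan is to exhibit $\pi^{(n)} \sim \mathrm{Hyp}(n,r,m)$ as \emph{a} stationary distribution; since the chain is already observed to be irreducible and aperiodic on a finite state space, this is enough to identify it as \emph{the} stationary distribution.

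I would lift the dynamics to the level of labeled balls, following the logic in \cite{T96}. Fix a set $R \subset \{1,\dots,n\}$ of size $r$ representing the red balls, and let $S_t \subset \{1,\dots,n\}$ with $|S_t|=m$ denote the set of labeled balls in the left urn at time $t$. One step is to choose subsets $A \subset S_t$ and $B \subset \{1,\dots,n\}\setminus S_t$, each of size $k$, uniformly and independently, and to set $S_{t+1} = (S_t \setminus A) \cup B$. Since $|A|=|B|=k$ and $A \cap B = \emptyset$, we have $|S_t \cap S_{t+1}| = m-k$, and conversely any $S'$ with $|S \cap S'|=m-k$ is reached from $S$ in exactly one way (take $A = S \setminus S'$ and $B = S' \setminus S$). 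Hence the lifted transition kernel is $q(S,S') = \mathbbm{1}[|S \cap S'| = m-k]/\bigl(\binom{m}{k}\binom{n-m}{k}\bigr)$, which is manifestly symmetric in $S, S'$. The uniform distribution $\nu$ on $\binom{[n]}{m}$ therefore satisfies detailed balance and is stationary for $S_t$.

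Next I would apply the standard lumping criterion to the projection $\phi(S) = |S \cap R|$. For any $S$ with $|S \cap R| = x$, decomposing the uniformly chosen $k$-subsets $A \subset S$ and $B \subset \{1,\dots,n\}\setminus S$ into red and white parts gives $|A \cap R| \sim \mathrm{Hyp}(m, x, k)$ and $|B \cap R| \sim \mathrm{Hyp}(n-m, r-x, k)$ independently, matching \eqref{clown}. Thus $X_t = \phi(S_t)$ is a Markov chain with transition kernel $p(x,y)$, and the pushforward $\phi_*\nu$ is stationary for $X_t$; a direct count yields $\phi_*\nu(j) = \binom{r}{j}\binom{n-r}{m-j}/\binom{n}{m}$, which is precisely $\pi^{(n)}(j)$. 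The only step that requires care is verifying that $\phi$ is a valid lumping (i.e., that the conditional law of $\phi(S_{t+1})$ given $S_t$ depends on $S_t$ only through $\phi(S_t)$), but this is immediate from the hypergeometric description above, so no serious obstacle is anticipated, consistent with the claim in the paper that the proof is simple.
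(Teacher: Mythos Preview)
Your argument is correct and is precisely the ``logic in \cite{T96}'' that the paper invokes: lift to the labeled-ball chain on $\binom{[n]}{m}$, observe that the transition kernel is symmetric so the uniform measure is stationary, and push forward under $\phi(S)=|S\cap R|$ via the lumping criterion to obtain $\mathrm{Hyp}(n,r,m)$. The paper does not spell out a proof beyond this citation, so your write-up is in fact more detailed than what appears there.
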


We now define for each $n$
\begin{equation}\label{orange}
    t_n := \frac{-\log{n}}{2\log{\modu{1 - \frac{kn}{m(n-m)}}}} = \frac{\log n}{2 \modu{\log \modu{1 - \frac{kn}{m(n-m)}}}}.
\end{equation}

\noindent \textbf{Notation}. Let $a_n$ and $b_n$ be sequences in $\R$. We write $a_n \lesssim b_n$ if there exists a constant $C \in \R$ such that $\modu{a_n} \leq C \modu{b_n}$ for all $n$ large enough. If $\limn \frac{a_n}{b_n} = 1$, we write $a_n \sim b_n$. To denote the probability of transitioning from a state $x$ into a set $S$ in $t$ steps, we write
\begin{align*}
    P_t(x,S) := \sum_{y \in S} p_t(x,y) = \P_x(X_t \in S),
\end{align*}
where $\P_x$ denotes the probability measure induced by the Markov chain starting from a state $X_0 = x \in \cal X$. We may also sometimes write $X_t^x$ to refer to a random variable induced by the same starting criterion, especially in the context of two chains started at different initial states. 

Our main result bounds $t_{\mathrm{mix}}^{(n)}(\varepsilon)$ within a constant distance of $t_n$ and is given below.

\begin{thm}
    \label{thm:main}
    Let $\{X_t^{(n)}\}_n$ be a sequence of generalized Bernoulli-Laplace chains satisfying Assumption \ref{banana}. Then there exist constants $N_1, N_2, c$ and $C$, all depending on $\epsilon, \gamma, \eta, h$, such that for all $n \geq N_1$, $$t_n - c \leq t_{\mathrm{mix}}^{(n)}(\epsilon),$$ and for all $n \geq N_2$, $$t_{\mathrm{mix}}^{(n)}(\epsilon) \leq t_n + C,$$
    with $t_n$ as in (\ref{orange}). 
\end{thm}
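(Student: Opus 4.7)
The plan is to establish the two halves of the theorem separately. Both rely on a direct computation that identifies $\lambda := 1 - kn/(m(n-m))$ as the relevant eigenvalue: from the hypergeometric means in (\ref{clown}),
\begin{equation*}
    \E_x[X_{t+1}] - \frac{mr}{n} = \lambda\paren{x - \frac{mr}{n}},
    \quad \text{so} \quad
    \E_x[X_t] - \frac{mr}{n} = \lambda^t\paren{x - \frac{mr}{n}}.
\end{equation*}
Thus $f(x) = x - mr/n$ is an eigenfunction with eigenvalue $\lambda$, and $t_n$ in (\ref{orange}) is precisely the time at which $n\modu{\lambda}^t$ crosses $\sqrt{n}$, the order of the stationary standard deviation.

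For the lower bound, I would use Wilson's distinguishing-statistic method applied to $f$, starting from the extremal state $X_0 = 0$. The bias $\E_0[f(X_t)] = -\lambda^t \cdot mr/n$ has magnitude $\Theta(n\modu{\lambda}^t)$ under Assumption \ref{banana}, while a routine recursive computation (using $\mathrm{Var}(X_{t+1}) = \lambda^2\mathrm{Var}(X_t) + \E[\mathrm{Var}(X_{t+1}\mid X_t)]$ together with the fact that conditional hypergeometric variances are $O(n)$) shows $\mathrm{Var}_0(X_t) = O(n)$ uniformly in $t$; the stationary variance $\mathrm{Var}_\pi(f) = \Theta(n)$ is a standard hypergeometric calculation. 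Chebyshev applied to the event $\curly{f(X_t) \leq -\alpha\sqrt{n}}$ then separates the time-$t$ law from $\pi$ in total variation whenever $n\modu{\lambda}^t$ exceeds a large enough multiple of $\sqrt n$, which yields $t_{\mathrm{mix}}^{(n)}(\epsilon) \geq t_n - c$ for a constant $c = c(\epsilon)$.

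For the upper bound I would use a coupling argument in the spirit of \cite{A22} and \cite{N19}. Given two chains $X_t^x, X_t^y$, one constructs a coupling of the hypergeometric swap variables by drawing common random subsets in each urn (with the alignment of red balls chosen so that the sign of the correlation matches the sign of $\lambda$), so that the difference $D_t = X_t^x - X_t^y$ satisfies the $L^2$ contraction $\E\brac{D_{t+1}^2 \mid D_t} \leq \lambda^2 D_t^2 + \text{error}$. Iterating and using $\P(D_t \neq 0) \leq \E\brac{D_t^2}$ (since $D_t \in \Z$), together with $\modu{D_0} \leq n$, gives coupling with high probability at $t \geq t_n + C$. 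Conditioning $X_t^y$ on $y \sim \pi$ and applying the coupling inequality then finishes the upper bound.

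The main obstacle is the upper bound coupling when $\lambda < 0$ (the regime $k > m(n-m)/n$): the naive monotone coupling fails to produce an $L^2$ contraction by factor $\lambda^2$ and can in fact overshoot and swap the sign of $D_t$, so careful negative correlation between the swap draws in the two chains is needed. Controlling the error term in the recursion sharply enough to achieve a \emph{constant} (not $n$-dependent) window — rather than the softer bound one gets from a plain spectral $\chi^2$ estimate using the decomposition of \cite{K09} — is where the delicate coupling techniques of the cited works become indispensable.
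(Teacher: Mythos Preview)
Your lower-bound plan matches the paper's: Wilson's method applied to the first eigenfunction (the paper writes it as $s_1(x)=1-\tfrac{n}{rm}x$, a scalar multiple of your $f$), with the variance along the chain controlled via the identity $s_1^2=b_0+b_1s_1+b_2s_2$ and the bound $\lambda_2^{t_n}=O(1/n)$ --- equivalent in effect to your recursion.

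The upper-bound plan has a genuine gap. For \emph{any} coupling with the correct marginals one has $\E[D_{t+1}\mid X_t^x,X_t^y]=\lambda D_t$, so by Jensen $\E[D_{t+1}^2\mid D_t]\ge\lambda^2 D_t^2$: the ``error'' in your recursion is necessarily nonnegative. Iterating gives $\E[D_t^2]\ge\lambda^{2t}D_0^2$, and since $|\lambda|^{2t_n}=1/n$ while $|D_0|$ can be $\Theta(n)$, at $t=t_n+C$ one only gets $\E[D_t^2]\ge|\lambda|^{2C}n$. The Markov bound $\P(D_t\neq 0)\le\E[D_t^2]$ is then vacuous; your scheme would at best yield mixing by time $2t_n+C$, off by a factor of two. (The $L^1$ version fares no better: $|\lambda|^{t_n}\cdot n=\sqrt n$.) The sign of $\lambda$ is a red herring --- the obstruction is arithmetic, not the construction of the coupling.

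The paper avoids exact coalescence altogether. Its upper bound has three stages: (i)~the second-moment eigenfunction estimate shows each chain lies within $\kappa\sqrt n$ of $rm/n$ at time $t_n$ with probability $1-O(\kappa^{-2})$; (ii)~the path coupling of \cite{N19}, with contraction rate $1-\tfrac{k(n-2k)}{m(n-m)}$ (strictly larger than $|\lambda|$), is run for $\kappa$ further steps to bring the two chains within $\sqrt n/\kappa^3$ of each other; and (iii)~a hypergeometric local limit theorem, comparing the one-step kernels to discrete normals, shows that for such a pair $(x,y)$ one has $\|\delta_xP-\delta_yP\|_{TV}=O(\kappa^{-2})$. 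Step~(iii) is the ingredient your proposal is missing: what is needed is a distributional overlap estimate for chains that are $O(\sqrt n)$ apart, not a bound on the probability that they have merged exactly.
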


\begin{rem}
    By taking the maximums of a finite number of constants, Theorem \ref{thm:main} implies that for any sequence $\{X_t^{(n)}\}_n$ of Bernoulli-Laplace chains satisfying Assumption \ref{banana}, there exists a constant $C'$ such $$t_n - C' \leq t_{\mathrm{mix}}^{(n)}(\epsilon) \leq t_n + C'$$ for all $n$. This further implies that $t_{\mathrm{mix}}^{(n)}(\epsilon)$ exhibits cutoff at $t_n$ with constant window. 
\end{rem}

\begin{rem} If we assume further that $$\modu{\frac{1}{\log\modu{1 - \frac{\gamma}{\eta(1-\eta)}}} - \frac{1}{\log \modu{1 - \frac{kn}{m(n-m)}}}} = O \left( \frac{1}{\log n} \right),$$ then Theorem \ref{thm:main} implies (for possibly different $C'$) that
\begin{equation}
    \frac{-\log{n}}{2\log \modu{1 - \frac{\gamma}{\eta(1-\eta)}}} - C' \leq t_{\mathrm{mix}}^{(n)}(\epsilon) \leq \frac{-\log{n}}{2\log \modu{1 - \frac{\gamma}{\eta(1-\eta)}}} + C',
\end{equation} and so $t_{\mathrm{mix}}^{(n)}(\epsilon)$ exhibits cutoff at order $\log n$ with constant window.
\end{rem}

\begin{rem}
    Theorem \ref{thm:main} fails when $\gamma = h(1-h)$. In that case, $t_{\mathrm{mix}}^{(n)}(\epsilon)$ is usually a bounded sequence. For details, see Section \ref{sec:plantain}.
\end{rem}



\section{Lemmata} \label{sec:lemmata}

\subsection{Spectral Lemmata}

All of the eigenvalues and eigenfunctions for the Bernoulli-Laplace model are known (see \cite{K09}), but here we only need the first two. This is similar to the approach in \cite{A22}. 

\begin{lem}\label{eigdef}
    The Markov transition has first two right eigenfunctions
    \begin{align}
        s_1(x) &= 1 - \frac{n}{rm}x \\
        s_2(x) &= 1 - \frac{2(n-1)}{rm}x + \frac{(n-1)(n-2)}{r(r-1)m(m-1)}x(x-1)
    \end{align}
    with respective eigenvalues
    \begin{align}
        \lambda_1 &= 1 - \frac{nk}{m(n-m)} \\
        \lambda_2 &= 1 - \frac{2(n-1)k}{m(n-m)} + \frac{(n-1)(n-2)k(k-1)}{m(m-1)(n-m)(n-m-1)}.
    \end{align}
    That is,
    \begin{align*}
        \E_x[s_1(X_1)] = \lambda_1 s_1(x) \bigand \E_x[s_2(X_1)] = \lambda_2 s_2(x).
    \end{align*}
\end{lem}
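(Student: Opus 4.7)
The plan is to verify both eigenvalue identities by direct moment computation, exploiting the conditional decomposition $X_1 = x - H_1^x + H_2^x$ with $H_1^x, H_2^x$ independent hypergeometrics as in (\ref{clown}). The needed ingredients are the first and second factorial moments of a hypergeometric $H \sim \mathrm{Hyp}(N, K, \ell)$, namely $\E[H] = \ell K / N$ and $\E[H(H-1)] = \ell(\ell-1)K(K-1)/[N(N-1)]$, specialized to each of $H_1^x$ and $H_2^x$.

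For $s_1$, since it is affine only first moments are needed. By linearity,
\[
    \E_x[X_1] = x\paren{1 - \frac{k}{m} - \frac{k}{n-m}} + \frac{kr}{n-m} = \lambda_1 x + \frac{kr}{n-m},
\]
using $k/m + k/(n-m) = kn/[m(n-m)]$. Substituting into $s_1(x) = 1 - (n/rm)x$ and regrouping, both the constant and the $x$-coefficient pick up a factor of $\lambda_1$, yielding $\E_x[s_1(X_1)] = \lambda_1 s_1(x)$.

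For $s_2$, the key structural point is that $s_2$ is already expressed in the falling-factorial basis $\{1, x, x(x-1)\}$, so it is better to compute $\E_x[X_1(X_1-1)]$ than $\E_x[X_1^2]$. Expanding $(x - H_1^x + H_2^x)(x - 1 - H_1^x + H_2^x)$, using independence to split $\E[H_1^x H_2^x] = \E[H_1^x]\E[H_2^x]$, and invoking the second factorial moments $\E[H_1^x(H_1^x-1)] = k(k-1)x(x-1)/[m(m-1)]$ together with $\E[H_2^x(H_2^x-1)] = k(k-1)(r-x)(r-x-1)/[(n-m)(n-m-1)]$, the result can be regrouped in the basis $\{1, x, x(x-1)\}$ and substituted into the formula for $s_2$ to recover $\lambda_2 s_2(x)$.

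The main obstacle is purely bookkeeping in the $s_2$ step: several summands must be combined and reorganized into the three falling-factorial coefficients. The principal simplification is that the coefficient on $x(x-1)$ depends only on the second factorial moments, which directly produce the $\lambda_2$ formula; the lower-order coefficients then consolidate by matching against the already-verified $s_1$ relation, so no independent new calculation is required for them.
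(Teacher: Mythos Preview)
Your proposal is correct and is exactly the ``direct calculation'' the paper has in mind; indeed the paper's entire proof reads ``This can be checked by a direct calculation,'' so your outline is already more explicit than what the paper provides. One small caveat: your final sentence suggests the lower-order coefficients in the $s_2$ verification come for free from the $s_1$ relation, but that is not quite so---the $s_1$ identity supplies $\E_x[X_1]$ as an ingredient, yet you still have to carry out the algebraic regrouping of the constant and linear terms in $\E_x[s_2(X_1)]$ and check they equal $\lambda_2$ times the corresponding coefficients of $s_2(x)$; no separate principle collapses that step.
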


\begin{proof}
    This can be checked by a direct calculation. 
\end{proof}

\begin{cor}
    For all $t \geq 0, x \in \mathcal X$, we have
    \begin{align*}
        \E_x[s_1(X_t)] &= \lambda_1^t s_1(x) \\
        \E_x[s_2(X_t)] &= \lambda_2^t s_2(x)
    \end{align*}
    by the Markov property.
\end{cor}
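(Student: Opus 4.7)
The plan is to prove both identities simultaneously by induction on $t$, since the argument is identical with $s_1,\lambda_1$ replaced by $s_2,\lambda_2$. Fix $i \in \{1,2\}$.

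For the base case, $t=0$ gives $\E_x[s_i(X_0)] = s_i(x) = \lambda_i^0 s_i(x)$ trivially (and $t=1$ is exactly the content of Lemma \ref{eigdef}). For the inductive step, assume $\E_y[s_i(X_t)] = \lambda_i^t s_i(y)$ for every $y \in \mathcal{X}$. Applying the tower property of conditional expectation and then the Markov property, I would write
\begin{align*}
    \E_x[s_i(X_{t+1})]
    &= \E_x\bigl[\, \E_x[s_i(X_{t+1}) \mid X_t]\, \bigr] \\
    &= \E_x\bigl[\, \E_{X_t}[s_i(X_1)]\, \bigr] \\
    &= \E_x[\lambda_i s_i(X_t)] \\
    &= \lambda_i \cdot \lambda_i^t s_i(x) = \lambda_i^{t+1} s_i(x),
\end{align*}
where the third equality uses Lemma \ref{eigdef} applied at the (random) state $X_t$, and the fourth uses the inductive hypothesis after pulling the constant $\lambda_i$ outside.

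There is no real obstacle here; the only thing to be a little careful about is that the inductive hypothesis must be stated uniformly in the starting state (so that it can be evaluated at the random state $X_t$), but since Lemma \ref{eigdef} and the desired identity are stated for every $x \in \mathcal{X}$, this is automatic.
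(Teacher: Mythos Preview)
Your proof is correct and is exactly the standard fleshing-out of the paper's one-line justification ``by the Markov property''; the paper gives no further argument, and your induction via the tower property is precisely what that phrase is meant to invoke.
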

Doing some algebra, we find that
\begin{align*}
    s_1^2(x) = b_0 + b_1s_1(x) + b_2s_2(x),
\end{align*}
where
\begin{align*}
        b_0 = {(n-m)(n-r) \over (n-1)rm}, \hspace{0.5cm} b_1 = -{(n - 2r)(n - 2m) \over (n - 2)rm}, \hspace{0.5cm} b_2 = {n^2(r - 1)(m-1) \over (n - 1)(n - 2)rm}.
\end{align*}

\begin{cor}
    For any $x, t,$ we have
    $$\E_x[s_1^2(x)] = b_0 + b_1 \lambda_1^t s_1(x) + b_2 \lambda_2^t s_2(x).$$
\end{cor}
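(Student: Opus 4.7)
The plan is to reduce this corollary to the previous one by simply applying the decomposition of $s_1^2$ in the $\{1, s_1, s_2\}$ basis already displayed immediately before the statement. Note that I read the right-hand side as $\E_x[s_1^2(X_t)]$ (the expression $\E_x[s_1^2(x)]$ on the left is just a deterministic number, independent of $t$, so it must be $X_t$ inside; I will treat this as a typo and prove the natural statement).

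First, I would evaluate the identity
\begin{align*}
    s_1^2(y) = b_0 + b_1 s_1(y) + b_2 s_2(y)
\end{align*}
at the random variable $y = X_t$, so that
\begin{align*}
    s_1^2(X_t) = b_0 + b_1 s_1(X_t) + b_2 s_2(X_t)
\end{align*}
as an identity of random variables. Since $b_0$ is a deterministic constant and the right-hand side is a finite linear combination (note that $\mathcal X$ is finite, so every $s_i(X_t)$ is integrable), I would next apply $\E_x[\,\cdot\,]$ to both sides and use linearity of expectation to get
\begin{align*}
    \E_x[s_1^2(X_t)] = b_0 + b_1 \E_x[s_1(X_t)] + b_2 \E_x[s_2(X_t)].
\end{align*}

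Finally, substituting the two identities $\E_x[s_1(X_t)] = \lambda_1^t s_1(x)$ and $\E_x[s_2(X_t)] = \lambda_2^t s_2(x)$ from the preceding corollary yields the desired formula. There is essentially no obstacle here; the only thing that needed to be verified was the algebraic decomposition of $s_1^2$ as $b_0 + b_1 s_1 + b_2 s_2$ with the stated constants $b_0, b_1, b_2$, and that has already been recorded in the display just above the corollary. One sanity check I would do before stopping is to confirm the constants by checking two convenient values of $x$ (for instance $x = 0$, which makes $s_1(x) = s_2(x) = 1$ and forces $b_0 + b_1 + b_2 = 1$), ensuring no transcription error crept into the coefficients.
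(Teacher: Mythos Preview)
Your proposal is correct and matches the paper's approach exactly: the corollary is stated without proof in the paper because it follows immediately from the displayed decomposition $s_1^2 = b_0 + b_1 s_1 + b_2 s_2$ combined with the preceding corollary, which is precisely the argument you give. Your remark about the typo ($s_1^2(x)$ versus $s_1^2(X_t)$) is also accurate.
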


\begin{lem}\label{lem:Strawberry}
    Let $\lambda_1, \lambda_2$ be as above. Then $\lambda_1^2 - \lambda_2 = O({1 \over n}).$
\end{lem}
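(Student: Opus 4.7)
The approach is a direct computation: expand $\lambda_1^2$, subtract $\lambda_2$ term-by-term, and show that under Assumption~\ref{banana} (which guarantees $k,m,n-m = \Theta(n)$) every surviving term is $O(1/n)$.

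First I would write
\begin{align*}
\lambda_1^2 &= 1 - \frac{2nk}{m(n-m)} + \frac{n^2 k^2}{m^2(n-m)^2},\\
\lambda_2 &= 1 - \frac{2(n-1)k}{m(n-m)} + \frac{(n-1)(n-2)k(k-1)}{m(m-1)(n-m)(n-m-1)},
\end{align*}
so that the constant terms cancel and the ``linear-in-$k$'' contributions combine cleanly to
\[
-\frac{2nk}{m(n-m)} + \frac{2(n-1)k}{m(n-m)} = -\frac{2k}{m(n-m)} = O(1/n),
\]
since $k = \Theta(n)$ and $m(n-m) = \Theta(n^2)$ by Assumption~\ref{banana}.

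The main work is then to show that the two ``quadratic-in-$k$'' pieces agree to order $O(1/n)$. I would factor the $\lambda_2$ term as
\[
\frac{(n-1)(n-2)k(k-1)}{m(m-1)(n-m)(n-m-1)} = \frac{n^2 k^2}{m^2 (n-m)^2} \cdot \frac{(n-1)(n-2)}{n^2}\cdot \frac{k-1}{k}\cdot \frac{m}{m-1}\cdot \frac{n-m}{n-m-1}
\]
and Taylor-expand each of the last four ratios in $1/n$, $1/k$, $1/m$, $1/(n-m)$, all of which are $O(1/n)$ under our assumptions. This gives
\[
\frac{(n-1)(n-2)k(k-1)}{m(m-1)(n-m)(n-m-1)} = \frac{n^2 k^2}{m^2 (n-m)^2}\left[1 + O(1/n)\right] = \frac{n^2 k^2}{m^2 (n-m)^2} + O(1/n),
\]
because the prefactor $\frac{n^2 k^2}{m^2(n-m)^2}$ is bounded (indeed it tends to $\frac{\gamma^2}{h^2(1-h)^2}$). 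Subtracting then yields $\lambda_1^2 - \lambda_2 = O(1/n)$.

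The only subtlety, and the step I'd be most careful about, is verifying that each of the small correction factors really is $O(1/n)$ uniformly: this is precisely where Assumption~\ref{banana} is used, since it (together with the stated remark) ensures $m, n-m \geq 2$ and $k/n \to \gamma > 0$, so none of the denominators $k$, $m-1$, $n-m-1$ degenerate. Once the expansion is controlled, the bound is immediate.
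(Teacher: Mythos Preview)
Your proposal is correct and follows essentially the same route as the paper: both expand $\lambda_1^2-\lambda_2$ directly and observe that the surviving terms are $O(1/n)$ under Assumption~\ref{banana}. Your treatment is in fact more careful than the paper's, which simply writes the expression and asserts ``the claim follows'' without spelling out why the difference of the two quadratic pieces is $O(1/n)$; your multiplicative factorization makes that step explicit. (Incidentally, your sign $-\tfrac{2k}{m(n-m)}$ on the linear contribution is the correct one.)
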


\begin{proof}
    Observe that
    \begin{align*}
        \lambda_1^2 - \lambda_2 &= \paren{1 - \frac{nk}{m(n-m)}}^2 - \paren{1 - \frac{2(n-1)k}{m(n-m)} + \frac{(n-1)(n-2)k(k-1)}{m(m-1)(n-m)(n-m-1)}} \\
        &= \frac{2k}{m(n-m)} + \paren{{n^2k^2 \over m^2 (n - m)^2}  - {(n - 2) (n - 1) k (k - 1) \over (n - m)(n - m - 1)m(m - 1)}}.
    \end{align*}
    The claim follows.
\end{proof}

\begin{lem} \label{lem:eigenvals2}
    Let $\lambda_2$ be as above, $t_n$ as in (\ref{orange}), and suppose $\gamma \neq h(1-h)$. Then $\lambda_2^{t_n} = O({1 \over n})$. 
\end{lem}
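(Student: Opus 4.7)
The plan is to exploit two facts: by construction, $|\lambda_1|^{2 t_n} = 1/n$, and by Lemma \ref{lem:Strawberry}, $\lambda_2$ is within $O(1/n)$ of $\lambda_1^2$. Provided $\lambda_1^2$ stays uniformly bounded away from $0$, the additive closeness becomes multiplicative closeness, and $\lambda_2^{t_n}$ will differ from $\lambda_1^{2 t_n}$ by only a bounded factor.

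First I would pin down the asymptotic behavior of $\lambda_1$. Under Assumption \ref{banana}, $nk/(m(n-m)) \to \gamma/(h(1-h))$, so $\lambda_1 \to \ell := 1 - \gamma/(h(1-h))$. The hypothesis $\gamma \neq h(1-h)$ gives $\ell \neq 0$, and within the remaining constraints $0 < \gamma \leq h \leq 1/2$ (together with $\gamma \neq 1/2$ when $h=1/2$) one checks that $|\ell| < 1$: the only way $\gamma/(h(1-h)) = 2$ is $h = \gamma = 1/2$, which is explicitly excluded. Hence for all $n$ large enough, $|\lambda_1|$ lies in a compact subinterval of $(0,1)$, so $|\log |\lambda_1||$ is bounded away from $0$ and $\infty$; in particular $t_n = \Theta(\log n)$.

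Next I would combine Lemma \ref{lem:Strawberry} with the above to factor $\lambda_2 = \lambda_1^2 (1 + O(1/n))$, which also forces $\lambda_2 > 0$ for $n$ large enough. Taking logarithms gives
\begin{align*}
\log \lambda_2 = 2 \log |\lambda_1| + O(1/n),
\end{align*}
and multiplying by $t_n$ yields
\begin{align*}
t_n \log \lambda_2 = 2 t_n \log |\lambda_1| + O(t_n / n) = -\log n + O((\log n)/n),
\end{align*}
where the identity $2 t_n \log |\lambda_1| = -\log n$ uses the definition of $t_n$ together with $\log |\lambda_1| < 0$. Exponentiating gives $\lambda_2^{t_n} = e^{-\log n + o(1)} = O(1/n)$.

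The main obstacle is really just bookkeeping: one must check that $|\lambda_1|$ stays strictly inside $(0,1)$ so the logarithms are uniformly controlled, and then confirm that the multiplicative error $(1 + O(1/n))$ from Lemma \ref{lem:Strawberry} survives being raised to a power of size $\Theta(\log n)$ — which it does, since $(1 + O(1/n))^{\Theta(\log n)} = 1 + o(1)$.
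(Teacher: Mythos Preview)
Your argument is correct and shares the same core idea as the paper's proof: both exploit $|\lambda_1|^{2t_n}=1/n$ together with the closeness of $\lambda_2$ to $\lambda_1^2$. The difference lies in how that closeness is used. The paper spends the first half of its proof establishing the \emph{exact} inequality $\lambda_1^2 - \lambda_2 \geq 0$ by a direct algebraic check in $k$; once that is in hand, $|\lambda_2|^{t_n} \leq (\lambda_1^2)^{t_n} = 1/n$ is immediate, with no need to control error terms under exponentiation. You bypass that algebraic verification entirely, relying only on the $O(1/n)$ estimate of Lemma~\ref{lem:Strawberry} and the fact that $\lambda_1^2$ is bounded away from zero (via $\gamma \neq h(1-h)$) to convert additive closeness into multiplicative closeness; the price is the extra step of checking that a $(1+O(1/n))$ factor raised to power $t_n = \Theta(\log n)$ stays bounded, which you correctly handle. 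Your route is arguably cleaner in that it reuses Lemma~\ref{lem:Strawberry} rather than redoing finer algebra, while the paper's route gives a sharper one-sided bound without any asymptotic bookkeeping.
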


\begin{proof}
    We first show that $$\lambda_1^2 - \lambda_2 = \frac{2k}{m(n-m)} + \paren{{n^2k^2 \over m^2 (n - m)^2}  - {(n - 2) (n - 1) k (k - 1) \over (n - m)(n - m - 1)m(m - 1)}} \geq 0.$$
    Notice that for $n \geq 4$, we have $$\frac{4k^2}{m^2(n-m)^2} \leq \frac{2k}{m(n-m)}.$$ It thus suffices to show that
    \begin{align*}
        {(n^2+4)k^2 \over m^2(n-m)^2}  - {(n - 2) (n - 1) k (k - 1) \over (n-m)(n - m - 1)m(m - 1)} &\geq 0,
    \end{align*}
    which is equivalent to $$\frac{(n^2+4)k}{m(n-m)} - \frac{(n-2)(n-1)(k-1)}{(n-m-1)(m-1)} \geq 0.$$
    It is easy to check that the claim holds for $k=1$ and $k=m$. Since the left-hand side is linear in $k$, this implies that the claim holds for all $k \in \{1, \dots, m\}$.

    The condition $\gamma \neq h(1-h)$ implies $\limn \lambda_2 > 0$. We may thus assume without loss of generality that $\lambda_2 = \modu{\lambda_2}$. Therefore $\log{(\lambda_1^2)} - \log{\modu{\lambda_2}} =: a_n \geq 0$, and so
\begin{align*}
    \modu{\lambda_2^{t_n}} &= \exp\paren{\log{\modu{\lambda_2}}\frac{\log n}{|\log(\lambda_1^2)|}} \\
    &= \exp\paren{\left[\log\paren{\lambda_1^2} - a_n \right] \frac{\log n}{\modu{\log(\lambda_1^2)}}} \\
    &= \exp\paren{-\log n}\exp\paren{\frac{-a_n \log n}{\modu{\log(\lambda_1^2)}}} \\
    &\lesssim \frac{1}{n}.
\end{align*}
\end{proof}

\subsection{Hypergeometric Comparison Lemmata}


Denote the standard normal density by $\phi(x)$. For $l \in \X$, let
\begin{equation} \label{mango}
    p := \frac{l}{m}, \quad q := 1-p, \quad \sigma := 1 \vee \sqrt{kpq\paren{1-{k \over n}}}.
\end{equation} 

\begin{defn}
We say $Z$ has a discrete normal distribution on $\X$ with parameters $\zeta \in \R$ and $\xi > 0$ if
\begin{align*}
\P(Z=j) = {1 \over \xi \hspace*{.3mm} \mathcal{N}_{\zeta,\xi}} \phi\paren{{j-\zeta \over \xi}} \hspace*{1cm} \textrm{where} \hspace*{1cm} \mathcal{N}_{\zeta,\xi} := \sum_{x \in \X_k} \frac{1}{\xi} \phi\paren{x-\zeta \over \xi}
\end{align*}
for $j \in \mathcal{X}$, and denote $Z \sim \mathrm{dN}(\zeta, \xi)$.
\end{defn}
Further, let $x_j := {j - kp \over \sigma}$ and $\mathcal{N} := \mathcal{N}_{kp, \sigma}$.
Finally, we can give an important technical lemma, the proof of which follows Lemma 5.2 from \cite{A22}.

\begin{lem}\label{eggplant}
Given the above parameters, supposing Assumption \ref{banana} holds, letting $l = {rm \over n} + O(\sqrt{n})$, we have that 
\begin{align*}
    \mathcal{N} = 1 + O\paren{1 \over \sqrt{n}}.
\end{align*}
\end{lem}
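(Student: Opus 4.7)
The plan is to view $\mathcal{N}$ as a Riemann sum of mesh $1/\sigma$ for the integral $\int_\R \phi(u)\,du = 1$, then bound the errors coming from (a) truncation to the finite range $\X_k$ and (b) discrete-vs-continuous quadrature on the infinite lattice.

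First I would verify $\sigma = \Theta(\sqrt{n})$. The hypothesis $l = rm/n + O(\sqrt{n})$ gives $p = l/m = r/n + O(1/\sqrt{n})$, so $p \to \eta$ and $q = 1-p \to 1-\eta$, both strictly inside $(0,1)$ by Assumption \ref{banana}. Together with $k/n \to \gamma \in (0,1/2]$ and $1 - k/n \to 1-\gamma > 0$, this forces $kpq(1-k/n) = \Theta(n)$, so $\sigma = \sqrt{kpq(1-k/n)} = \Theta(\sqrt{n})$ for all $n$ large, and in particular the $1 \vee$ in the definition of $\sigma$ is inactive.

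Setting $g(x) := \frac{1}{\sigma}\phi\paren{\frac{x-kp}{\sigma}}$, I would split
\[
\mathcal{N} - 1 = \paren{\mathcal{N} - \sum_{j \in \Z} g(j)} + \paren{\sum_{j \in \Z} g(j) - \int_\R g(x)\,dx}.
\]
For the truncation term, since both endpoints of $\X_k$ lie at distance $\Omega(n)$ from $kp$ — equivalently, $\Omega(\sqrt{n})$ standard deviations away — the Gaussian tail bound $1 - \Phi(T) \lesssim e^{-T^2/2}$ shows $\modu{\mathcal{N} - \sum_{j \in \Z} g(j)} = O(e^{-cn})$. For the quadrature error, I would invoke a standard midpoint-rule type estimate $\modu{\sum_{j \in \Z} g(j) - \int_\R g(x)\,dx} = O\paren{\int_\R \modu{g''(x)}\,dx}$ and compute directly $\int_\R \modu{g''(x)}\,dx = \sigma^{-2}\int_\R |u^2 - 1|\phi(u)\,du = O(1/\sigma^2) = O(1/n)$. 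Combining yields $\modu{\mathcal{N} - 1} = O(1/n)$, which is certainly $O(1/\sqrt{n})$.

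The main obstacle is the truncation step: the argument requires that \emph{both} endpoints of $\X_k$ lie $\Omega(\sqrt{n})$ standard deviations from the mean $kp$, not just one. This hinges on $p$ and $q$ each remaining bounded away from $0$, so that both $kp/\sigma \sim \sqrt{kp/(q(1-k/n))}$ and $kq/\sigma \sim \sqrt{kq/(p(1-k/n))}$ grow like $\sqrt{n}$. The non-degeneracy we need here is exactly what the bounds $0 < \gamma \leq h \leq 1/2$ and $0 < \eta \leq 1/2$ from Assumption \ref{banana} supply, together with the $O(\sqrt{n})$ closeness of $l$ to the stationary mean $rm/n$.
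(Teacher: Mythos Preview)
Your argument is correct and in fact delivers the sharper conclusion $\mathcal{N} = 1 + O(1/n)$. The paper follows the same Riemann-sum philosophy but with a coarser comparison: it quotes from \cite{A22} an inequality of the form
\[
\mathcal{N} \ \ge\ 1 - \P\!\paren{\mathcal{Z} \le -\frac{kp}{\sigma}} - \P\!\paren{\mathcal{Z} \ge \frac{kq}{\sigma}} - \frac{2}{\sqrt{2\pi\sigma^2}},
\]
bounds the two Gaussian tails by Chebyshev (each $O(1/n)$), and is left with the last term $O(1/\sigma) = O(1/\sqrt{n})$. So the paper's bottleneck is a first-order sum-versus-integral estimate of size $\max g = O(1/\sigma)$, whereas your second-order midpoint bound $\bigl|\sum_{j} g(j) - \int g\bigr| \lesssim \int |g''| = O(1/\sigma^2)$ gains one power of $\sigma$. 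Your version is also self-contained and handles both directions at once, while the paper writes out only the lower bound explicitly and leans on \cite{A22} for the rest; the trade-off is that the paper's proof is a few lines once the cited inequality is accepted.
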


\begin{proof}
Let $\mathcal{Z}$ be the standard normal distribution on $\R$. From the proof of Lemma 5.2 from \cite{A22}, we can obtain that 
$$
\mathcal{N} \ge 1 - \P \left(\mathcal{Z} \le {-kp \over \sigma } \right) - \P \left(\mathcal{Z} \ge {kq \over \sigma} \right) - {2 \over \sqrt{2\pi \sigma^2}}.
$$
Letting $l = {rm \over n} + O(\sqrt{n})$, we have
$$
p = {r \over n} + O \left( {1 \over \sqrt{n}}\right) 
\bigand
q = {r \over n} + O \left( {1 \over \sqrt{n}}\right) 
\bigand
\sigma \sim \sqrt{\gamma(1-\gamma) n \eta (1-\eta)}.
$$
Applying Chebyshev's inequality, we get
$$
\P\paren{\mathcal{Z} \le {-kp \over \sigma}} \le {\sigma^2 \over k^2 p^2} = O\left( {1 \over n} \right).
$$
Similarly, $\P\left( \mathcal{Z} = {kq \over \sigma} \right) = O\left( {1 \over n} \right)$. Of course, ${2 \over \sqrt{2 \pi \sigma^2}} = O\left( {1 \over \sqrt{n}} \right)$, and so the claim follows.
\end{proof}

\section{Bounds under Assumption \ref{banana}} \label{sec:lowerbound}

\subsection{Lower Bound}

In this section, we prove the lower bound of Theorem \ref{thm:main}, which we restate below.

\begin{prop} \label{prop:lower}
    Let $\{X_t^{(n)}\}_n$ be a sequence of generalized Bernoulli-Laplace chains satisfying Assumption \ref{banana}. Then there exist constants $N_1 := N_1(\epsilon, \gamma, \eta, h)$ and $c := c(\epsilon, \gamma, \eta, h)$ such that for all $n \geq N_1$, $$t_n - c \leq t_{\mathrm{mix}}^{(n)}(\epsilon).$$
\end{prop}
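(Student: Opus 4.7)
The plan is to apply Wilson's eigenfunction method: take $s_1$ as a distinguishing statistic and start the chain at $X_0 = 0$, where $s_1(0) = s_2(0) = 1$. By Lemma \ref{eigdef} and its corollary, $\E_0[s_1(X_t)] = \lambda_1^t$, whereas stationarity together with $\lambda_1 \neq 1$ forces $\E_\pi[s_1] = 0$. The strategy is to exhibit a threshold event $A_t$ on which $\delta_0 P^t$ and $\pi$ disagree by more than $\epsilon$ whenever $t \leq t_n - c$, using Chebyshev's inequality on both measures.

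The key computation concerns the second moments. Using the identity $s_1^2 = b_0 + b_1 s_1 + b_2 s_2$ recorded before Lemma \ref{lem:Strawberry}, one has $\E_0[s_1^2(X_t)] = b_0 + b_1 \lambda_1^t + b_2 \lambda_2^t$ and $\E_\pi[s_1^2] = b_0$. Under Assumption \ref{banana}, $b_0 = \Theta(1/n)$, $b_1 = O(1/n)$, and $b_2 = 1 + O(1/n)$. Combining these with Lemma \ref{lem:Strawberry} (which gives $\lambda_2^t = \lambda_1^{2t}(1 + O(t/n))$ after a short calculation) and Lemma \ref{lem:eigenvals2} (which gives $\lambda_1^{2t_n}, \lambda_2^{t_n} = O(1/n)$), I would obtain
\begin{align*}
\mathrm{Var}_\pi(s_1) = b_0 = \Theta(1/n) \bigand \mathrm{Var}_0(s_1(X_t)) = O(1/n)
\end{align*}
uniformly for $t = t_n + O(1)$. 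Taking $A_t := \{\mathrm{sign}(\lambda_1^t)\, s_1(X) \geq |\lambda_1^t|/2\}$ and applying Chebyshev's inequality, both $\P_0(A_t^c)$ and $\pi(A_t)$ are bounded by $O(1/(n|\lambda_1|^{2t}))$. At $t = \lfloor t_n - c \rfloor$, the definition of $t_n$ yields $|\lambda_1|^{2t} = |\lambda_1|^{-2c}/n \cdot (1 + o(1))$, making both bounds $O(|\lambda_1|^{2c})$. Since Assumption \ref{banana} forces $|\lambda_1|$ to converge into $(0, 1)$ strictly, choosing $c = c(\epsilon, \gamma, \eta, h)$ large yields $\|\delta_0 P^{t_n - c} - \pi\|_{TV} \geq \P_0(A_t) - \pi(A_t) \geq 1 - O(|\lambda_1|^{2c}) > \epsilon$ for all $n \geq N_1$.

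The main obstacle is verifying that Assumption \ref{banana} forces $\lim |\lambda_1| = |1 - \gamma/(h(1-h))|$ to remain strictly less than $1$, which amounts to $\gamma < 2h(1-h)$; this follows from $\gamma \leq h \leq 1/2$ together with the exclusion $\gamma \neq 1/2$ when $h = 1/2$, since equality in $\gamma \leq h \leq 2h(1-h)$ forces $h = 1/2$ and $\gamma = 1/2$. Secondary obstacles are that $\lambda_1$ may be negative, which is handled by the sign-adjusted event $A_t$ above, and that one must pass from the real $t_n$ to integer time, which is harmless because $|\lambda_1|$ is bounded away from $0$ and $1$, so absorbing rounding into the constant $c$ costs only a bounded factor.
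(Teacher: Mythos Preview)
Your proposal is correct and follows essentially the same approach as the paper: both apply Wilson's eigenfunction method using $s_1$ as the distinguishing statistic, exploit the identity $s_1^2 = b_0 + b_1 s_1 + b_2 s_2$ to control second moments, and finish with Chebyshev on a threshold event. The paper rescales to $s = \sqrt{n-1}\,s_1$ (so the variances are $O(1)$ rather than $O(1/n)$), uses two separate thresholds $\alpha, d$ and handles the sign of $\lambda_1^t$ via a union of two intervals, but these are cosmetic differences relative to your single threshold $|\lambda_1^t|/2$ and sign-adjusted event $A_t$.
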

\begin{proof}
    We define 
\begin{align*}
    s(x) = \sqrt{n-1}s_1(x)
\end{align*}
and calculate the mean and variance of $s(X_t)$ both in the case where the chain is started from $0$ and with respect to the stationary distribution. We also let $t = t_n - c$, where $c$ is to be determined later. 

Clearly, $\E_\pi[s(X_t)] = 0$ and
\begin{align*}
    \text{Var}_\pi(s(X_t)) &= (n - 1){n^2 \over r^2 m^2} \text{Var}_\pi(X_t) = {(n - m)(n - r) \over rm} \le {(1 - h)(1 - \eta) \over \eta h} + 2
\end{align*}
for sufficiently large $n$. Now, 
\begin{align*}
    |\E_0[s(X_t)]| &= |\sqrt{n - 1} \lambda_1^t s_1(0)| \\
    &= \sqrt{n - 1}|\lambda_1|^{t_n}|\lambda_1|^{-c} \\
    &= \sqrt{n - 1 \over n} |\lambda_1|^{-c}.
\end{align*}
To calculate
\begin{align*}
    \text{Var}_0[s(X_t)] &= (n - 1)\E_0[s_{1}(X_t)^2] - \E_0[s(X_t)]^2,
\end{align*}
we first write 
\begin{align*}
    \E_0[s_1(X_t)^2] &= b_0 + b_1 \lambda_1^t + b_2 \lambda_2^t \\
    &= {(n - m)(n - r) \over (n - 1)rm } - {(n - 2m)(n - 2r) \over (n - 2)rm } \lambda_1^{t_n} \lambda_1^{-c} + {(m - 1)(r - 1) n^2 \over rm (n - 1) (n - 2)} \lambda_2^{t_n} \lambda_2^{-c}.
\end{align*}
By Lemma \ref{lem:eigenvals2}, $\lambda_2^{t_n} = O({1 \over n})$. Now consider these three terms in $(n - 1)\E_0[s_1(X_t)^2]$ in the limit as $n \to \infty$. The first, 
$$ {(n - 1) (n - m)(n - r) \over (n - 1)rm}\to {(1 - h)(1 - \eta) \over h\eta}.$$
The second vanishes, and the third is asymptotically bounded by $1$. It follows that
$$\text{Var}_0[s(X_t)] \leq {(1 - h)(1 - \eta) \over h \eta } + 2 =: K$$ for sufficiently large $n$.
We now define the sets $$ A_\alpha = \left\{x \in \mathcal X : \modu{s(x)} \leq \alpha \sqrt{K} \right\}$$ 
and
\begin{align*}
    B_{d, c} &= \left\{x : \min(|s(x) - \lambda_1^{-c}|, |s(x) + \lambda_1^{-c}|) \leq d\sqrt{K}\right\} \\
    &= \left\{x : |s(x) - \lambda_1^{-c}| \leq d\sqrt{K}\right\} \cup \left\{x : |s(x) + \lambda_1^{-c}| \leq d\sqrt{K}\right\}.
\end{align*}
The two sets reflect the two possibilities for the mean. Using Chebyshev's inequality, we calculate
\begin{align*}
    \pi_n(A_\alpha) \geq 1 - {1 \over \alpha^2}
\end{align*}
For $B_{d,c}$, we have
\begin{align*}
    P_t(0, B_{d, c}) &\geq P_t\paren{0, \curly{\modu{s(X_t) - \sqrt{n \over n - 1} \E_0[s(X_t)]} \leq d\sqrt{K}}} \\
    &= P_t\paren{0, \curly{\modu{s(X_t) -  \E_0[s(X_t)]} \leq d\sqrt{K}}} \\
    &\qquad + P_t\paren{0, \curly{\E_0[s(X_t)] - d\sqrt{K} \leq s(X_t) < \E_0[s(X_t)]\sqrt{n \over n - 1} - d\sqrt{K}}} \\
    &\qquad - P_t\paren{0, \curly{\E_0[s(X_t)] + d\sqrt{K} < s(X_t) \leq \E_0[s(X_t)]\sqrt{n \over n - 1} + d\sqrt{K}}} \\
    &\geq P_t\paren{0, \curly{\modu{s(X_t) -  \E_0[s(X_t)]} \leq d\sqrt{K}}} \\
    &\qquad - P_t\paren{0, \curly{\E_0[s(X_t)] + d\sqrt{K} < s(X_t) \leq \E_0[s(X_t)]\sqrt{n \over n - 1} + d\sqrt{K}}} \\
    &\geq 1 - {1 \over d^2} - {\modu{\paren{\sqrt{n \over n - 1} - 1} \E_0[s(X_t)]} \over {n\sqrt{n - 1} \over rm}} \\
    &\geq 1 - {1 \over d^2} - {K' \over \sqrt{n}}
\end{align*}
eventually, where $K'$ is a constant.
Notice that if 
\begin{equation} \label{eqn:disjointnesscondition}
    |\lambda_1|^{-c} - d\sqrt{K} \geq 2\alpha \sqrt{K}
\end{equation}
we have that $A_\alpha$ and $B_{d, c}$ are disjoint, implying that $P_t(0, A_\alpha) \leq {1 \over d^2} + {K' \over \sqrt{n}}$. Choosing $\alpha$ and $d$ such that ${1 \over \alpha^2}$ and ${1 \over d^2}$ are less than ${ \varepsilon \over 3}$, and then $c$ such that (\ref{eqn:disjointnesscondition}) is satisfied, we have 
\begin{align*}
    ||P_t(0, \cdot) - \pi_n||_{TV} &\geq |P_t(0, A_\alpha) - \pi_n(A_\alpha)| \\
    &\geq 1 - {1 \over \alpha^2} - {1 \over d^2} - {K' \over \sqrt{n}}\\
    &\geq 1 - \varepsilon
\end{align*} which gives the desired result. 

Observe that equality is (asymptotically) achieved in (\ref{eqn:disjointnesscondition}) when 
$$c(\varepsilon, \gamma, \eta, h) = {\log\paren{2\alpha +d} + \log \sqrt{K} \over \modu{\log\modu{1 - {\gamma \over h(1 - h)}}}}$$
which does not depend on $n$. 
\end{proof}

\subsection{Upper Bound} \label{sec:upperbound}

We now begin the proof of the upper bound in Theorem \ref{thm:main}.

\begin{defn}
    Define the following:
    \begin{align*}
    \mathcal{I}_{n}(\kappa) &= \left\{x \in \X : \left|x-\frac{rm}{n}\right| \leq \kappa \sqrt{n} \right\} \\
    F_n(\kappa) &= \left\{(x,y) \in \mathcal{I}_n(\kappa)^2 : \modu{x-y} \leq \frac{\sqrt{n}}{\kappa^3} \right\} \\
    \tau_{x,y}(\kappa) &= \min \{t : (Y_t^x,Y_t^y) \in F_n(\kappa)\}.
    \end{align*}
\end{defn}
\subsubsection{Path Coupling Contraction}

We use a generalization of the coupling introduced in \cite{N19}. Let $Y_{1,t}$ and $Y_{2,t}$ be generalized Bernoulli-Laplace chains with parameters $m, r, n$. We define a coupling between these chains as 
follows: In each chain, label the balls in the left urn with $\{1,\dots,m\}$ and the balls in the right urn with $\{m+1,\dots,n\}$ such that the red balls in any given urn have lower indices than the white balls in the same urn. Now, choose subsets $A \subset \{1,\dots,m\}$ and $B \subset \{m+1,\dots,n\}$ uniformly at random such that $\modu{A}=\modu{B}=k$. Then, in both chains, move the balls with labels in $A$ to the right urn and labels in $B$ to the left urn. Relabel the balls and repeat.

This is a coupling because each individual chain, $Y_{1,t}$ or $Y_{2,t}$, has the same behavior as $X_t$. However, there is a tendency for $Y_{1,t}$ and $Y_{2,t}$ to approach one another because the same indices are being swapped in each chain and indices are not independent of color. This is formalized below. 

\begin{lem}\label{lem:coupling}
Let $x,y \in \X$. Then for all $t \geq 0$, $$\E[|Y_t^x - Y_t^y|] \leq \left(1 - \frac{k(n-2k)}{m(n-m)}\right)^t |x-y|.$$ That is, each application of this path coupling's Markov transition is a strict contraction with coefficient $$1 - \frac{k(n-2k)}{m(n-m)} \in (0,1).$$
\end{lem}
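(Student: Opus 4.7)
The approach is the standard path-coupling (Bubley--Dyer) pattern adapted to the explicit coupling defined above: reduce to a one-step contraction for adjacent states $\modu{x-y}=1$, extend to arbitrary pairs by telescoping, and then iterate via the Markov property.

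For the adjacent case, I would take $y = x+1$ and read off from the labeling convention that the two chains differ on exactly one ball in each urn --- namely ball $x+1$ in the left urn (white in $Y_{1,t}$, red in $Y_{2,t}$) and ball $m+r-x$ in the right urn (red in $Y_{1,t}$, white in $Y_{2,t}$). Tracking the swap, the increment $Y_1^{x+1}-Y_1^{x}$ equals $1$ minus an indicator that $x+1 \in A$ minus an indicator that $m+r-x \in B$, and thus lies in $\{-1,0,1\}$. Since $A$ and $B$ are independent with marginals $\P(x+1 \in A) = k/m$ and $\P(m+r-x \in B) = k/(n-m)$, a short hypergeometric calculation gives
\begin{align*}
\E[\modu{Y_1^{x+1} - Y_1^{x}}] = 1 - \frac{k}{m} - \frac{k}{n-m} + \frac{2k^2}{m(n-m)} = 1 - \frac{k(n-2k)}{m(n-m)} =: \rho.
\end{align*}

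To lift this to arbitrary $\modu{x-y}$, I would extend the two-chain coupling to a grand coupling of chains started from every state in $\X$, using the same draw of $(A,B)$ with the state-dependent labeling. For $x<y$, a telescoping sum $Y_1^y - Y_1^x = \sum_{i=x}^{y-1}(Y_1^{i+1} - Y_1^i)$ combined with the triangle inequality gives $\E[\modu{Y_1^y - Y_1^x}] \le \rho \modu{x-y}$. Iteration via the Markov property --- conditioning on $(Y_t^x, Y_t^y)$ to get $\E[\modu{Y_{t+1}^x - Y_{t+1}^y} \mid Y_t^x, Y_t^y] \le \rho \modu{Y_t^x - Y_t^y}$ and inducting on $t$ --- delivers the claimed bound $\rho^t \modu{x-y}$. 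That $\rho \in (0,1)$ follows from the identity $m(n-m) - k(n-2k) = (m-k)(n-m-k) + k^2 > 0$ (using $k \le m \le n/2$) for positivity, and from $n > 2k$ (forced by Assumption \ref{banana}, since $k = m$ combined with $m = n/2$ is excluded) for strict contraction.

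The main subtlety is the combinatorial bookkeeping in the adjacent case: confirming that under the ``reds at lowest indices'' convention, adjacent states really do differ on exactly one ball per urn, so that the one-step difference collapses into the clean two-indicator formula above. Once that is in place, the contraction factor drops out of a short hypergeometric computation and the remaining extensions are routine path-coupling.
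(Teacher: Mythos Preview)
Your proposal is correct and follows essentially the same path-coupling argument as the paper: reduce to adjacent states, compute the one-step contraction factor $\rho$ from the two-indicator structure, telescope via the triangle inequality, and iterate using the Markov property. You are in fact more careful than the paper in two places --- explicitly identifying the discrepant balls $x+1$ and $m+r-x$ to justify the indicator formula, and verifying $\rho\in(0,1)$ via the factorization $m(n-m)-k(n-2k)=(m-k)(n-m-k)+k^2$ together with the exclusion $k=m=n/2$ --- both of which the paper leaves implicit.
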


\begin{proof}
    The proof is very similar to that of Lemma 3.4 in \cite{A22}. We may assume without loss of generality that $x > y$. By the Markov property, it suffices to prove for $t = 1$. Now consider the case when $x - y = 1$. Then $Y_1^x - Y_1^y$ can only takes values $-1,0, \text{ and } 1$. We directly calculate that $$\P(Y_1^x - Y_1^y = -1) = \frac{k}{m}\left(\frac{k}{n-m}\right)$$ and $$\P(Y_1^x - Y_1^y = 1) = \left( \frac{m-k}{m} \right) \left( \frac{n-m-k}{n-m}\right).$$ Thus
    \begin{align*}
        \E[|Y_1^x - Y_1^y|] &= \frac{k}{m}\left(\frac{k}{n-m}\right) + \left( \frac{m-k}{m} \right) \left( \frac{n-m-k}{n-m}\right) \\
        &= 1 - \frac{k(n-2k)}{m(n-m)}.
    \end{align*}
    When $x - y > 1$, we use the triangle inequality to get $$\E[|Y_1^x - Y_1^y|] \leq \sum_{i=0}^{x-y-1} \E[|Y_1^{y+i} - Y_1^{y+i+1}|] \leq (x-y)\left( 1 - \frac{k(n-2k)}{m(n-m)}\right).$$
\end{proof}

Now we state the following:

\begin{prop}\label{thenword}
    Suppose Assumption \ref{banana} is satisfied. Then for any $\kappa \in \N$ such that
    \begin{align} \label{eq:kappacon}
        \kappa^4\paren{1-{\gamma(1-2\gamma) \over h(1-h)}}^{\kappa} \leq {1 \over \kappa^2},
    \end{align}
    we have $$\P(\tau_{x,y}(\kappa) > t_n + \kappa) = O \left( \frac{1}{\kappa^2} \right).$$
\end{prop}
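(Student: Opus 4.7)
The plan is to bound
$$
\P(\tau_{x,y}(\kappa) > t_n + \kappa) \leq \P\bigl((Y_{t_n+\kappa}^x, Y_{t_n+\kappa}^y) \notin F_n(\kappa)\bigr),
$$
which is valid because if the coupled pair lies in $F_n(\kappa)$ at time $t_n+\kappa$ then $\tau_{x,y}(\kappa)\leq t_n+\kappa$. I then write the complement of $F_n(\kappa)$ as the union of (A) $\{Y_{t_n+\kappa}^x \notin \I_n(\kappa)\}$, (B) $\{Y_{t_n+\kappa}^y \notin \I_n(\kappa)\}$, and (C) $\{|Y_{t_n+\kappa}^x - Y_{t_n+\kappa}^y| > \sqrt{n}/\kappa^3\}$, and show each has probability $O(1/\kappa^2)$ by separate arguments.

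For (A) and (B), each coupled chain marginally has the same distribution as a generalized Bernoulli-Laplace chain, so the spectral tools of Section~\ref{sec:lemmata} apply directly. From the identity $X_t - rm/n = -(rm/n)\, s_1(X_t)$ and the boundedness of $s_1, s_2$ on $\mathcal{X}$ under Assumption~\ref{banana}, one computes $|\E_x[X_{t_n+\kappa}] - rm/n| = (rm/n)|\lambda_1|^{t_n+\kappa}|s_1(x)| = O(\sqrt{n})$ uniformly in $x$, using $|\lambda_1|^{t_n} = 1/\sqrt{n}$ from (\ref{orange}). The corollary $\E_x[s_1^2(X_t)] = b_0 + b_1 \lambda_1^t s_1(x) + b_2 \lambda_2^t s_2(x)$, combined with the asymptotic estimates $b_0, b_1 = O(1/n)$, $b_2 = O(1)$, and Lemma~\ref{lem:eigenvals2} ($\lambda_2^{t_n} = O(1/n)$), yields $\text{Var}_x[X_{t_n+\kappa}] = (rm/n)^2 \text{Var}_x[s_1(X_{t_n+\kappa})] = O(n)$, again uniformly in $x$. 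A standard Chebyshev application then bounds (A) and (B) each by $O(1/\kappa^2)$, for $\kappa$ above a fixed threshold---a condition forced anyway by (\ref{eq:kappacon}).

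For (C), I introduce the event $E := \{Y_{t_n}^x, Y_{t_n}^y \in \I_n(\kappa)\}$. The same Chebyshev bound applied at time $t_n$ yields $\P(E^c) = O(1/\kappa^2)$. On $E$ we have $|Y_{t_n}^x - Y_{t_n}^y| \leq 2\kappa\sqrt{n}$, so by the strong Markov property at $t_n$ and iterating Lemma~\ref{lem:coupling} for the remaining $\kappa$ steps,
$$
\E\bigl[|Y_{t_n+\kappa}^x - Y_{t_n+\kappa}^y|\, \mathbbm{1}_E \bigr] \leq \Bigl(1 - \tfrac{k(n-2k)}{m(n-m)}\Bigr)^\kappa \cdot 2\kappa \sqrt{n}.
$$
Markov's inequality and (\ref{eq:kappacon}), together with $k(n-2k)/(m(n-m)) \to \gamma(1-2\gamma)/(h(1-h))$, then give $\P(|Y_{t_n+\kappa}^x - Y_{t_n+\kappa}^y| > \sqrt{n}/\kappa^3,\, E) \leq 2\kappa^4 (1 - k(n-2k)/(m(n-m)))^\kappa = O(1/\kappa^2)$.

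The main obstacle---and the reason for this two-phase structure---is that the coupling contraction rate $1 - k(n-2k)/(m(n-m))$ is strictly closer to $1$ than the spectral rate $|\lambda_1| = |1 - kn/(m(n-m))|$ whenever $\gamma>0$. Running the coupling directly from time $0$ for all $t_n + \kappa$ steps, starting from the worst-case separation $|x - y| = O(n)$, therefore does not yield a useful contraction: the resulting bound is much larger than $\sqrt{n}/\kappa^3$. The resolution is to exploit the faster spectral decay to bring each chain close to $rm/n$ during the first $t_n$ steps, shrinking the effective initial separation from $O(n)$ down to $O(\kappa \sqrt{n})$, and only then invoke the coupling contraction of Lemma~\ref{lem:coupling} to close the remaining factor of $\kappa^4$ over the final $\kappa$ steps.
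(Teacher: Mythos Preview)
Your proof is correct and follows essentially the same two-phase strategy as the paper: spectral second-moment bounds (via $\E_x[s_1^2(X_t)] = b_0 + b_1\lambda_1^t s_1(x) + b_2\lambda_2^t s_2(x)$ together with Lemma~\ref{lem:eigenvals2}) to place each chain in $\I_n(\kappa)$ by time $t_n$, followed by the coupling contraction of Lemma~\ref{lem:coupling} over the remaining $\kappa$ steps combined with Markov's inequality and condition~(\ref{eq:kappacon}). The only cosmetic difference is that the paper applies Markov's inequality directly to $s_1^2(X_t)$ rather than separating mean and variance as you do; also note that (\ref{eq:kappacon}) does not literally force $\kappa$ to be large (e.g.\ $\kappa=1$ satisfies it), but this is harmless since for bounded $\kappa$ the claimed $O(1/\kappa^2)$ bound is trivial.
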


\begin{proof}

First observe that
\begin{align*}
\P(\tau_{x,y}(\kappa) > t_n + \kappa) \leq \P(X_{t_n}^x \not\in \I_{n}(\kappa)) + \P(X_{t_n}^y \not\in \I_{n}(\kappa))
+ \P(X_{t_n + \kappa}^x \not\in \I_{n}(\kappa)) 
\\
+ \P(X_{t_n + \kappa}^y \not\in \I_{n}(\kappa))
+ \P\paren{Y_{t_n}^x, Y_{t_n}^y \in \I_n(\kappa), \modu{Y_{t_n + \kappa}^x - Y_{t_n + \kappa}^y} > \frac{\sqrt{n}}{\kappa^3}},
\end{align*}
where we may use $X$ instead of $Y$ for the first four terms because $X$ and $Y$ share the same distributions by the coupling. We now work to asymptotically bound these terms by $\frac{1}{\kappa^2}$. For any constants $t \geq 0$ and $z \in \X$, we have
\begin{align*}
\P_z(X_t \not\in \I_n(\kappa)) &= \P_z \paren{\left|X_t - {rm \over n}\right| > \kappa \sqrt{n}}\\
&= \P_z \paren{s_1^2(X_t) > \kappa^2 {n^3 \over r^4}} \\ 
&\leq {1 \over \kappa^2}{r^4 \over n^3} \E_z[s_1^2(X_{t})] \\
&= {1 \over \kappa^2}{r^4 \over n^3} \paren{b_0 + b_1 \lambda_1^{t}s_1(z) + b_2 \lambda_2^{t}s_2(z)}.
\end{align*}
Two of the terms involve $t = t_n$ and the other two $t = t_n + \kappa$. Since $\modu{\lambda_1}, \modu{\lambda_2} \leq 1$, it suffices to bound for $t = t_n$. As $\frac{r^4}{n^3} = O(n)$, we need only show $b_0, b_1 \lambda_1^{t_n}s_1(z), b_2 \lambda_2^{t_n}s_2(z) = O({1 \over n})$. The first two follow easy because $b_0, b_1 = O({1 \over n })$ and $\lambda_1$ and $s_1(z)$ are bounded. For the final term, notice that $b_2$ and $s_2(z)$ is bounded. It therefore suffices to show that $\lambda_2^{t_n} = O({1 \over n})$, and this follows from Lemma \ref{lem:eigenvals2}. Thus $$\P_z(X_t \not\in \I_n(\kappa)) = O \left(\frac{1}{\kappa^2} \right).$$

We now bound the last term. Observe that
\begin{align*}
\P \paren{Y_{t_n}^x,Y_{t_n}^y \in \mathcal{I}_{n}(\kappa), \left|Y_{t_n + \kappa}^x - Y_{t_n + \kappa}^y\right| > {\sqrt{n} \over \kappa^3}} &\leq \max_{x,y \in \mathcal{I}_{n}(\kappa)} \P \paren{\left|Y_{\kappa}^x - Y_{\kappa}^y\right| \geq {\sqrt{n} \over \kappa^3}} \\
&\leq \max_{x,y \in \mathcal{I}_{n}(\kappa)} {\kappa^3 \over \sqrt{n}}\E \brac{|Y_{\kappa}^x - Y_{\kappa}^y|} \\
&\leq 2\kappa^4 \paren{1-{k(n-2k) \over m(n-m)}}^{\kappa} \\
&= O \left( {1 \over \kappa^2} \right).
\end{align*}
\end{proof}

\subsubsection{Mixing Times for Chains Started Close Together}
We now endeavor to prove the following: 
\begin{prop}\label{peach}
Suppose that Assumption \ref{banana} is satisfied. Then,
\begin{align*}
    \max_{(x,y) \in F_n(\kappa)} \| \delta_x P - \delta_y P\|_{TV} = O \left( {1 \over \kappa^2} \right).
\end{align*}
\end{prop}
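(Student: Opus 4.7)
The plan is to use the triangle inequality through a discrete-normal intermediate. For any state $z \in \I_n(\kappa)$, set $\mu_z := \E_z[X_1]$ and $\sigma_z := \sqrt{\mathrm{Var}_z(X_1)}$, and write
\begin{align*}
    \| \delta_x P - \delta_y P \|_{TV} &\le \| \delta_x P - \mathrm{dN}(\mu_x, \sigma_x) \|_{TV} + \| \mathrm{dN}(\mu_y, \sigma_y) - \delta_y P \|_{TV} \\
    &\quad + \| \mathrm{dN}(\mu_x, \sigma_x) - \mathrm{dN}(\mu_y, \sigma_y) \|_{TV}.
\end{align*}
The outer two summands come from a local central limit theorem (LCLT) for the one-step transition; the middle term comes from closeness of parameters for $(x,y) \in F_n(\kappa)$.

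For the outer terms, the one-step transition from state $z$ is a shift of $H_2^z - H_1^z$, a difference of independent hypergeometrics. I would apply the pointwise Gaussian approximation for each of $H_1^z, H_2^z$ via Stirling's formula (multiplicative error $O(1/\sqrt n)$ on the bulk) and then convolve to obtain a pointwise Gaussian approximation for $p(z, \cdot)$ with mean $\mu_z$ and variance $\sigma_z^2$. Summing the pointwise errors over $\X$ --- whose effective support has size $\Theta(\sigma_z) = \Theta(\sqrt n)$ --- and using Lemma \ref{eggplant} (or an analogue at the transition parameters) to control the discrete-normal normalizer, I expect
\begin{align*}
    \| \delta_z P - \mathrm{dN}(\mu_z, \sigma_z) \|_{TV} = O\paren{\frac{1}{\sqrt n}}
\end{align*}
uniformly for $z \in \I_n(\kappa)$. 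This is exactly the strategy of Lemma 5.2 in \cite{A22}.

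For the middle term, direct computation gives
\begin{align*}
    \mu_x - \mu_y = (x-y)\paren{1 - \frac{k}{m} - \frac{k}{n-m}} \bigand |\sigma_x^2 - \sigma_y^2| = O(|x-y|),
\end{align*}
so for $(x,y) \in F_n(\kappa)$ we have $|\mu_x - \mu_y| = O(\sqrt n/\kappa^3)$ and $|\sigma_x - \sigma_y| = O(1/\kappa^3)$ while $\sigma_x, \sigma_y = \Theta(\sqrt n)$ under Assumption \ref{banana}. Plugging into the Lipschitz-in-parameters bound for the discrete-normal pmf (and again appealing to Lemma \ref{eggplant} for the normalizer) yields
\begin{align*}
    \| \mathrm{dN}(\mu_x, \sigma_x) - \mathrm{dN}(\mu_y, \sigma_y) \|_{TV} = O\paren{\frac{|\mu_x - \mu_y|}{\sigma_x} + \frac{|\sigma_x - \sigma_y|}{\sigma_x}} = O\paren{\frac{1}{\kappa^3}}.
\end{align*}

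Combining gives $\| \delta_x P - \delta_y P \|_{TV} = O(1/\sqrt n + 1/\kappa^3) = O(1/\kappa^2)$, since the constraint (\ref{eq:kappacon}) forces $\kappa$ to be much smaller than $n^{1/4}$. The main obstacle is the uniform LCLT in the second paragraph: although the pointwise Gaussian approximation of a single hypergeometric is classical, promoting it to a total-variation estimate uniformly in $z \in \I_n(\kappa)$ requires careful bookkeeping of the tails of both the discrete normal (beyond $\X$) and the hypergeometric convolution (away from the joint mean), as well as an extension of Lemma \ref{eggplant} that handles the $z$-dependent parameters of the full transition rather than a single hypergeometric.
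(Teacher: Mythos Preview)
Your strategy is close to the paper's but differs in one organizational choice that creates the extra hurdle you flag at the end. The paper never forms a single discrete normal for the full transition $\delta_z P$. Instead it keeps the two hypergeometric summands separate throughout: it introduces discrete normals $Z_a^b \sim \mathrm{dN}(kp_a^b,\sigma^b)$ approximating $H_a^b$ for $a\in\{1,2\}$, $b\in\{x,y\}$, and uses contractivity of total variation under convolution with independent summands to telescope
\[
\|\delta_x P - \delta_y P\|_{TV} \le \sum_{a,b}\|\mu_{H_a^b}-\mu_{Z_a^b}\|_{TV} + \|\mu_{Z_2^x+(x-y)}-\mu_{Z_2^y}\|_{TV} + \|\mu_{Z_1^x}-\mu_{Z_1^y}\|_{TV}.
\]
The first four terms are handled by an LCLT for a \emph{single} hypergeometric (the paper proves $\|\mu_H-\mu_Z\|_{TV}=O(1/\sqrt n)$ via Stirling/Hoeffding and the normalizer estimate of Lemma~\ref{eggplant}); the last two are handled by a direct parameter-Lipschitz comparison of discrete normals, yielding $O(1/\kappa^2)$. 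Your route instead requires showing $H_2^z-H_1^z+z \approx \mathrm{dN}(\mu_z,\sigma_z)$ in total variation, which amounts to proving that the convolution of two discrete normals on a finite lattice is $O(1/\sqrt n)$-close to a single discrete normal. That is doable but is genuine additional work; the paper's componentwise decomposition buys you exactly the ability to skip it, since the hypergeometric LCLT is then off-the-shelf and the discrete-normal comparison only ever involves matching a single Gaussian density to another with nearby parameters.

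One small correction: your last sentence misreads (\ref{eq:kappacon}). That inequality involves only $\kappa$ and the limiting parameters $\gamma,h$, not $n$; it forces $\kappa$ to be \emph{large enough}, not small relative to $n$. The reason $O(1/\sqrt n)$ is absorbed into $O(1/\kappa^2)$ in both your argument and the paper's is simply that $\kappa$ is held fixed while $n\to\infty$.
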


\begin{proof}

Let $H_1^{x}$ and $H_2^{x}$ be hypergeometric random variables as in Equation (\ref{clown}). 
First, let $Z_a^b \sim \mathrm{dN}_k(kp_a^b,\sigma^b)$ for $a \in \{1,2\}$ and $b \in \{x,y\}$, where 
\begin{align*}
    p_1^x = {x \over m}, \hspace{0.3cm} p_2^x = q_1^x, \hspace{0.3cm} p_1^y = {y \over m}, \hspace{0.3cm} p_2^y = q_1^y, \bigand \sigma^b = 1 \vee \sqrt{kp_1^b p_2^b \paren{1- {k \over n}}}.
    \end{align*} 
Now observe that
\begin{align*}
    \mu_{H_2^x - H_1^x+ x}& - \mu_{H_2^y - H_1^y + y} \\
    &= (\mu_{H_2^x - H_1^x + x} - \mu_{H_2^x - Z_1^x + x}) + (\mu_{H_2^x - Z_1^x + x} - \mu_{Z_2^x - Z_1^x + x}) + (\mu_{Z_2^x - Z_1^x + x} - \mu_{Z_2^x - Z_1^y + x}) \\
    \quad
    &+ (\mu_{Z_2^x - Z_1^y + x} - \mu_{Z_2^y - Z_1^y + y}) + (\mu_{Z_2^y - Z_1^y + y} - \mu_{H_2^y - Z_1^y + y}) + (\mu_{H_2^y - Z_1^y + y} - \mu_{H_2^y - H_1^y + y}).
\end{align*}
By the independence of all the hypergeometrics and discrete normals above, we obtain 
\begin{align*}
\| \delta_x P - \delta_y P\|_{TV} &= \| \mu_{H_2^x - H_1^x+ x} - \mu_{H_2^y - H_1^y + y} \|_{TV} \\ 
&\leq \sum_{b \in \{x,y\}} \sum_{a=1}^2 \|\mu_{H_a^b} - \mu_{Z_a^b}\|_{TV} + \|\mu_{Z_2^x + x - y} - \mu_{Z_2^y}\|_{TV} + \|\mu_{Z_1^x} - \mu_{Z_1^y}\|_{TV}.
\end{align*}
We now bound the first term by proving the following local limit theorem.
\begin{prop}[Local limit theorem]
   Given Assumption \ref{banana} and using parameters in (\ref{mango}), if $l = {rm \over n} + O(\sqrt{n}),$ $H \sim \mathrm{Hyp}(m,l,k),$ and $Z \sim \mathrm{dN}_k(kp,\sigma)$, then $$\|\mu_H - \mu_Z\|_{TV} = O \bigg({1 \over \sqrt{n}} \bigg).$$
\end{prop}
\begin{proof}
Let $L := \sup\{ j \in \X : x_j \geq -\delta \sigma\}$ and $R := \inf \{ j \in \X : x_j \leq \delta \sigma \}$ for a fixed $\delta \in (0, {1 \over 2}]$. Observe that 
\begin{align*}
2\|\mu_H - \mu_Z \|_{TV} &= \sum_{j=L}^R |\P(H = j) - \P(Z = j)| \\
&\quad + \sum_{j=0}^{L-1} | \P(H = j) - \P(Z = j)| + \sum_{j=R+1}^k |\P(H = j) - \P(Z = j)| \\
&\qquad \qquad \qquad \qquad \qquad \qquad \qquad =: T_1 + T_2 + T_3 \hspace{2cm}
\end{align*}
First, by triangle inequality, 
$$
| \P(H = j) - \P(Z = j)| \le \P(H = j) + \P(Z = j).
$$
and by applying H{\"o}ffding's inequality from \cite{hoffdinginequality}, we obtain
$$
T_2 + T_3 \le {(k+1)\exp\paren{{-{1 \over 2}\delta^2 \sigma^2}} \over \mathcal{N} \sigma \sqrt{2 \pi }} + \P(H < L) + \P(H > R),
$$
where it is clear that 
\begin{align*}
    {(k+1)\exp \paren{{-{1 \over 2}\delta^2 \sigma^2}} \over \mathcal{N} \sigma \sqrt{2 \pi }} \to 0.
\end{align*} 
For the other two terms, recall that any hypergeometric distribution is the sum of $k$ independent Bernoulli random variables and $H$ has mean $kp$. Therefore, again applying H{\"o}ffding's inequality, 
\begin{align*}
\P(H < L) + \P(H > R) &= \P(H - kp < L - kp) + \P(H - kp > R - kp)  \\
&\le 2\exp\paren{-2{(L-kp)^2 \over k}} + 2\exp\paren{-2{(R-kp)^2 \over k}} \\
&\le 2\exp\paren{-2{\delta \sigma^2 \over k}} + 2\exp\paren{-2{\delta \sigma^2 \over k}} \\
&\to 0
\end{align*}
where the last step occurs since $\sigma^2 \sim \gamma(1-\gamma) n \eta(1-\eta)$ and $kp \sim \gamma n \eta$.
Finally, for $T_1$, observe that
\begin{align*}
    T_1 &\le \sum_{j=L}^R \left| {\phi(x_j) \over \sigma } - {\phi(x_j) \over \mathcal{N} \sigma } \right| + \sum_{j=L}^R \left| \P(H=j) - {\phi(x_j) \over \sigma } \right| \\
    &\le \sum_{j=L}^R {\phi(x_j) \over \sigma } \modu{1 - {1 \over \mathcal{N}} } + \sum_{j=L}^R \left| \P(H=j) - {\phi(x_j) \over \sigma } \right| \\
    &\le |\mathcal{N} - 1 | + \sum_{j=L}^R \left| \P(H=j) - {\phi(x_j) \over \sigma } \right|.
\end{align*}
The left term is $O\left( {1 \over \sqrt{n} }\right)$ by Lemma \ref{eggplant}. For the right term, notice that 
$$
\sum_{j=L}^R \left| \P(H=j) - {\phi(x_j) \over \sigma } \right| \le 2 \sum_{j=L}^{\lfloor kp \rfloor} \left| \P(H=j) - {\phi(x_j) \over \sigma } \right|
$$
From Theorem 1 of \cite{L07}, or in particular as Lemma 5.3 in \cite{A22}, we have that
$$
2 \sum_{j=L}^{\lfloor kp \rfloor} \left| \P(H=j) - {\phi(x_j) \over \sigma } \right| \leq {D \over \sigma} = O\left( {1 \over \sqrt{n}} \right)
$$
since $0 < {k \over n} < 1$ holds for all but finitely many $n$. This proves the result.
\end{proof} 

To complete the proof of Proposition \ref{peach}, we must show that $$\|\mu_{Z_2^x + x - y} - \mu_{Z_2^y}\|_{TV} + \|\mu_{Z_1^x} - \mu_{Z_1^y}\|_{TV} 
 = O \left( { 1 \over \kappa^2} \right).$$

We solve only $\|\mu_{Z_2^x + x - y} - \mu_{Z_2^y}\|_{TV} \lesssim {1 \over \kappa^2}$ since the proof for $\|\mu_{Z_1^x} - \mu_{Z_1^y}\|_{TV}$ follows the same method, but without a correction for $x-y$. First, without loss of generality assume $x \ge y$. 
Then, let $\mathcal{Y} := \X \cap \{\X + x-y \}$, $\mathcal{N}^x := \mathcal{N}_{kp_2^x,\sigma^x}$, $\mathcal{N}^y := \mathcal{N}_{kp_2^y, \sigma^y}$, \\ $\overline{y_j} := {j - kp_2^y \over \sigma^y} $
and $\overline{x_j} := {j - x + y - kp_2^x \over \sigma^x}$. Define $J_n := [{k \over 2} - 4 \kappa \sqrt{n}, {k \over 2} + 4 \kappa \sqrt{n} ]$, then
\begin{align*}
    2\|\mu_{Z_2^x + x - y} &- \mu_{Z_2^y}\|_{TV} \le \sum_{j \in \mathcal{Y}_n \cap J_n} \left| {\phi(\overline{x_j}) \over \mathcal{N}^x \sigma^x} - {\phi(\overline{y_j}) \over \mathcal{N}^y \sigma^y} \right| + \sum_{j \in \mathcal{Y}_n \cap J_n^c} {\phi(\overline{x_j}) \over \mathcal{N}^x \sigma^x} \\ 
    &+ \sum_{j \in \mathcal{Y}_n \cap J_n^c} {\phi(\overline{y_j}) \over \mathcal{N}^y \sigma^y} + \sum_{\X \setminus (\X + x -y)} \P(Z_2^y = j) + \sum_{(\X + x - y) \setminus \X} \P(Z_2^x = j - x + y) \\
    &\qquad \qquad \qquad \qquad \qquad \qquad \qquad \qquad  =: \mathcal{T}_1 + \mathcal{T}_2 + \mathcal{T}_3 + \mathcal{T}_4 + \mathcal{T}_5.
\end{align*}
We now work backwards. $\mathcal{T}_4$ and $\mathcal{T}_5$ are bounded analogously, as are $\mathcal{T}_2$ and $\mathcal{T}_3$.

For $\mathcal{T}_4$, observe that $x \ge y$, and by the space we are summing over, $j \le x-y-1$. Further, $\sigma^y \sim \sqrt{\gamma(1-\gamma) n \eta(1-\eta)}$. Using Lemma \ref{eggplant} and looking at large $n$, we obtain:
$$
\mathcal{T}_4 \leq \sum_{j=0}^{x-y-1} \P(Z_2^y = j) \leq (x-y)\P(Z_3 = x-y-1) = {x-y \over \mathcal{N}^y \sigma^y} \phi\paren{{x-y-1-kp_2^y \over \sigma^y}} \leq C_0 e^{-c_1 n}
$$
for constants $C_0,c_1 > 0$ independent of $n$.

For $\mathcal{T}_2$, let $\mathcal{Z}$ be the standard normal distribution over $\R$. Using the definition of $J_n$ with $\kappa \ge 0$, $n \ge 0$, $0 < \gamma < {1 \over 2}$, and $(x,y) \in F_n(\kappa)$, we have that asymptotically, 
$$
{k \over 2} + 4 \kappa \sqrt{n} \ge \kappa \sqrt{n} + x - y + kp_2^x + 1 \bigand {k \over 2} - 4\kappa \sqrt{n} \le -\kappa \sqrt{n} + x - y - 1 + kp_2^x.
$$ 
Therefore, we can use integral comparison, monotonicity of the standard normal density away from the mean, and Lemma \ref{eggplant} to obtain
$$
    \mathcal{T}_3 
    \le \sum_{j \in J_n^c} {\phi(\overline{x_j}) \over \mathcal{N}^x \sigma^x} 
    \le \int_{J_n^c} {\phi(\overline{x_j}) \over \mathcal{N}^x \sigma^x} \dx 
    \le {2 \over \mathcal{N}^x} \P\paren{\mathcal{Z} \ge {\kappa\sqrt{n} \over \sigma^x}} 
    \le {2 (\sigma^x)^2 \E[\mathcal{Z}^2] \over n \mathcal{N}^x} {1 \over \kappa^2} 
    = O \left( {1 \over \kappa^2} \right),
$$
where we use that $\sigma^x \sim \sqrt{n\eta(1-\eta)\gamma(1-\gamma)}$.

Now, for $\mathcal{T}_1$, we can use the result of $\left| {1 \over \sigma^x} - {1 \over \sigma^y} \right| = O({1 \over n})$ from \cite{A22}. Using Lemma \ref{eggplant}, observe: 
\begin{align*}
    \mathcal{T}_1 &= \sum_{j \in \mathcal{Y}_n \cap J_n} \left| {\phi(\overline{x_j}) \over \sigma^x}  - {\phi({\overline{y_j}}) \over \sigma^y} \right| + O \left({1 \over \sqrt{n}}\right) \\
    &\le \sum_{j \in \mathcal{Y}_n \cap J_n} \left| {\phi(\overline{x_j}) \over \sigma^x}  - {\phi({\overline{y_j}}) \over \sigma^x} \right| + \sum_{j \in \mathcal{Y}_n \cap J_n} \phi(\overline{y_j})\left| {1 \over \sigma^x} - {1 \over \sigma^y} \right| + O \left({1 \over \sqrt{n}} \right). \\
    &\qquad \qquad \qquad \qquad \qquad \qquad =: \mathscr{T}_1 + \mathscr{T}_2 + O \left({1 \over \sqrt{n}} \right).
\end{align*}
It is easy to see that, since $\phi$ is bounded above by $1$, 
$$
\mathscr{T}_2 \le 8 \kappa \sqrt{n} \cdot O \left({ 1 \over n} \right),
$$
which is also $O({1 \over \sqrt n})$. For $\mathscr{T}_1$, first observe that $\phi$ is Lipschitz, and let $K$ to be its Lipschitz constant. Then 
\begin{align*}
    \mathscr{T}_1 &\le {K \over \sigma^x} \sum_{j \in \mathcal{Y}_n \cap J_n} \left| \overline{y_j} - \overline{x_j} \right| \\
    &= {K \over \sigma^x} \bigg( {1 \over \sigma^x} \sum_{j \in \mathcal{Y}_n \cap J_n} \left| j - kp_2^y - (j - x + y - kp_2^x)\right| + \left| {1 \over \sigma^x} - {1 \over \sigma^y} \right| \sum_{j \in \mathcal{Y}_n \cap J_n} |j- kp_2^y| \bigg) .
\end{align*}
It can be seen that $\left| j - kp_2^y - (j - x + y - kp_2^x)\right|$ simplifies to $|x-y + kp_2^x - kp_2^y|$, and using the triangle inequality and the definitions of $p_2^b$, we obtain $|x-y|\left( 1 + {k \over m} \right)$, which is less than or equal to ${\sqrt{n} \over \kappa^3} \left( 1 + {k \over m} \right)$ by definition of $F_n(\kappa)$. 
For the right term, observe that $|j-kp_2^y| \le 8 \kappa \sqrt{n}$ by the definition of $J_n$, thus we now have $O\left( {1 \over n} \right) \cdot O(n)$, and now distributing ${K \over \sigma^x}$ and recognizing that $\sigma^x \sim O( \sqrt{n})$ concludes that the right term is also $O\left( {1 \over \sqrt{n}} \right)$. Thus we now have:
\begin{align*}
    \mathscr{T}_1 &\le {K \over (\sigma^x)^2 } \sum_{j \in \mathcal{Y}_n \cap J_n} {\sqrt{n} \over \kappa^3} \left( 1 + {k \over m} \right) \\
    &\le {K n \over (\sigma^x)^2}  \left( 1 + {k \over m} \right) \cdot { 1 \over \kappa^2 }\\ 
    &= O \left( {1 \over \kappa^2} \right)
\end{align*}
as desired.
\end{proof}

\subsubsection{Proof of Upper Bound}

We now prove the upper bound in Theorem \ref{thm:main} using the results derived in the previous two subsections.

\begin{prop} \label{prop:upper}
    Let $\{X_t^{(n)}\}_n$ be a sequence of generalized Bernoulli-Laplace chains satisfying Assumption \ref{banana}. Then there exist constants $N_2 := N_2(\epsilon, \gamma, \eta, h)$ and $C := (\epsilon, \gamma, \eta, h)$ such that for all $n \geq N_2$, $$t_{\mathrm{mix}}^{(n)}(\epsilon) \leq t_n + C.$$
\end{prop}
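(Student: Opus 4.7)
The strategy is to stitch together Propositions \ref{thenword} and \ref{peach} via a two-stage coupling argument. Fix arbitrary starting states $x, y \in \X$ and choose $\kappa \in \N$, a constant depending only on $\epsilon, \gamma, \eta, h$, large enough that (i) the contraction condition (\ref{eq:kappacon}) of Proposition \ref{thenword} holds, and (ii) the sum of the two $O(1/\kappa^2)$ errors appearing below is strictly less than $\epsilon$. Such a $\kappa$ exists: under Assumption \ref{banana} one checks that $\gamma(1-2\gamma)/(h(1-h)) \in (0,1)$, so the base in (\ref{eq:kappacon}) lies in $(0,1)$ and $\kappa^4 (1-\gamma(1-2\gamma)/(h(1-h)))^\kappa \to 0$, while the implicit constants in Propositions \ref{thenword} and \ref{peach} depend only on $\gamma, \eta, h$. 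Set $T := \lceil t_n \rceil + \kappa + 1$ and $C := \kappa + 2$.

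Next, I would bound $\|\delta_x P^T - \delta_y P^T\|_{TV}$ by constructing the following hybrid coupling of the two chains. Run the path coupling of Lemma \ref{lem:coupling} up to the random time $\tau := \tau_{x,y}(\kappa)$ at which the pair first enters $F_n(\kappa)$. On the event $\{\tau \leq t_n + \kappa\}$, at step $\tau + 1$ switch to the maximal (optimal-TV) coupling between $\delta_{Y_\tau^x} P$ and $\delta_{Y_\tau^y} P$. If this step produces equal samples, propagate agreement forward via the identity coupling (both chains take the same random step thereafter); otherwise continue with any coupling. The strong Markov property ensures that the resulting joint process is a valid coupling of the two Markov chains at every time. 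Coalescence by time $T$ can fail only if either $\{\tau > t_n + \kappa\}$ occurs or the maximal coupling step at $\tau+1$ disagrees; by Proposition \ref{thenword} the first has probability $O(1/\kappa^2)$, and by Proposition \ref{peach} the conditional probability of the second is bounded by $\max_{(x', y') \in F_n(\kappa)} \|\delta_{x'} P - \delta_{y'} P\|_{TV} = O(1/\kappa^2)$. The coupling inequality then yields
\begin{align*}
    \|\delta_x P^T - \delta_y P^T\|_{TV} \leq \P(Y_T^x \neq Y_T^y) = O(1/\kappa^2).
\end{align*}

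To convert this into a bound on distance to stationarity, write $\pi = \sum_y \pi(y) \delta_y$ and apply the triangle inequality to get $\|\delta_x P^T - \pi\|_{TV} \leq \max_y \|\delta_x P^T - \delta_y P^T\|_{TV} = O(1/\kappa^2)$. By the choice of $\kappa$, there exists $N_2 = N_2(\epsilon, \gamma, \eta, h)$ such that for all $n \geq N_2$ this is $\leq \epsilon$, yielding $t_{\mathrm{mix}}^{(n)}(\epsilon) \leq T \leq t_n + C$ uniformly in the starting state.

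The main obstacle is the bookkeeping around the hybrid coupling: one must verify that switching couplings at the random time $\tau + 1$ (an a.s. finite stopping time by Proposition \ref{thenword}) still produces a legitimate coupling of the two chains, and that the two $O(1/\kappa^2)$ estimates combine additively rather than in some more delicate way. Once this is set up cleanly via the strong Markov property, the proof reduces to Propositions \ref{thenword} and \ref{peach} together with the standard reduction from pairwise TV distance to TV-to-stationary, and the rest is simply choosing constants.
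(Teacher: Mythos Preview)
Your proposal is correct and follows essentially the same route as the paper: wait until time $t_n+\kappa$ for the coupled pair to enter $F_n(\kappa)$ (Proposition~\ref{thenword}), then use Proposition~\ref{peach} to merge in one further step, combining via the strong Markov property and the standard reduction from pairwise TV distance to distance-to-stationarity. The only cosmetic difference is that you phrase the second stage as an explicit maximal-coupling step followed by the identity coupling, whereas the paper bounds $|\P_z(Y_s\in A)-\P_w(Y_s\in A)|$ directly by $\|\delta_z P-\delta_w P\|_{TV}$ and then invokes monotonicity of TV under the semigroup; the two formulations are equivalent.
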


\begin{proof} Let $x,y \in \X$ and $A \subset \X$. Select $\kappa \in \N$ satisfying (\ref{eq:kappacon}). Define $t := t_n + \kappa + 1$. Then by Proposition \ref{thenword} and the strong Markov property,
\begin{align*}
    &|\P_x(X_t \in A) - \P_y(X_t \in A)|\\
    &\quad =|\P_x(Y_t \in A) - \P_y(Y_t \in A)| \\
    &\quad \leq 2\P(\tau_{x,y}(\kappa) > t-1) + | \P_x(Y_t \in A, \tau_{x,y}(\kappa) \leq t-1)- \P_y(Y_t \in A, \tau_{x,y}(\kappa) \leq t-1)| \\
    &\quad \le O \left( {1 \over \kappa^2} \right) + \max_{\substack{z,w \in F_n(\kappa) \\ s \in \{1, \dots, t\}}} |\P_z(Y_s \in A) - \P_w(Y_s \in A)|.
\end{align*}
Observe that for $s \in \{1,\dots, t\}$ and $z,w \in F_n(\kappa)$, 
$$ \modu{\P_z(Y_s \in A) - \P_w(Y_s \in A)} \leq \|\delta_z P - \delta_w P\|_{TV} = O \left( {1 \over \kappa^2} \right),$$
where we use Proposition \ref{peach}. 
Thus, 
\begin{equation}\label{eq:as}
    \max_{x \in \X} \|\delta_x P^t - \pi \|_{TV} \leq 2 \max_{x,y \in \X} \| \delta_x P^t - \delta_y P^t \|_{TV}
    = 2 \max_{x,y \in \X} |\P_x(X_t \in A) - \P_y(X_t \in A)| ~ = O \left( {1 \over \kappa^2} \right).
\end{equation}

Now let $C_3$ be the asymptotic bounding constant in (\ref{eq:as}). Then $$\max_{x \in \X} \|\delta_x P^t - \pi \|_{TV} \leq \frac{C_3}{\kappa^2}$$ for $n$ large enough. If it is not already true, increase $\kappa$ so that ${C_1 \over \kappa^2} \leq \epsilon$ in addition to (\ref{eq:kappacon}). Then $C = \kappa + 1$ works as the constant referenced in the theorem statement, and this completes the proof.
\end{proof}

\section{Bounds when $\gamma = h(1-h) > 0$}\label{sec:plantain}

We now investigate the critical case when $\gamma = h(1-h)$. We will ultimately show that the asymptotic behavior of $t_{\mathrm{mix}}^{(n)}(\epsilon)$ differs.

\begin{ass}\label{plantain}
    For each $n$, assume without loss of generality that $r,m \leq \frac{n}{2}$. Suppose $\limn {k \over n} = \gamma $, $\limn {m \over n } = h$ and $\limn {r \over n} = \eta$, and that $\gamma = h(1-h).$ Assume further that
    \begin{equation}\label{plantainrate}
        \modu{\lambda_1} = \modu{1 - \frac{nk}{m(n-m)}} \lesssim \frac{1}{\sqrt{n}}.
    \end{equation}
\end{ass}

\begin{thm}\label{thm:plantainbounds}
    Let $\{X_t^{(n)}\}_n$ be a sequence of Bernoulli-Laplace chains satisfying Assumption \ref{plantain}. Then there exists constants $N := N(\epsilon, \gamma, \eta, h)$ and $C := C(\epsilon, \gamma, \eta, h)$ such that for all $n \geq N$,
    \begin{equation}
        2 \leq t_{\mathrm{mix}}^{(n)}(\epsilon) \leq C.
    \end{equation}
\end{thm}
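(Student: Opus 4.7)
The proof splits naturally into the lower bound $t_{\mathrm{mix}}^{(n)}(\epsilon) \geq 2$ and the upper bound $t_{\mathrm{mix}}^{(n)}(\epsilon) \leq C$. The upper bound should follow by essentially rerunning Section \ref{sec:upperbound} after observing that the mixing scale $t_n$ has degenerated to a bounded sequence in this regime, while the lower bound requires a one-step argument tailored to the critical case.

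For the upper bound, I would start by noting that the hypothesis $|\lambda_1| \lesssim 1/\sqrt n$ combined with Lemma \ref{lem:Strawberry} yields $\lambda_2 = \lambda_1^2 + O(1/n) = O(1/n)$, so that $\lambda_1^t, \lambda_2^t = O(1/n)$ for every fixed integer $t \geq 2$. In the proofs of Propositions \ref{thenword} and \ref{prop:upper}, the role played by $t_n$ was exactly to guarantee these decay bounds (see the Chebyshev computation of $\P_z(X_t \not\in \mathcal{I}_n(\kappa))$ and the invocation of Lemma \ref{lem:eigenvals2}). Replacing $t_n$ throughout by a fixed constant $T \geq 2$ therefore preserves every estimate used. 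The one-step coupling contraction rate of Lemma \ref{lem:coupling} converges to $1 - \gamma(1-2\gamma)/(h(1-h))$, which still lies strictly in $(0,1)$ under Assumption \ref{plantain}, so Proposition \ref{thenword} remains valid with $T$ in place of $t_n$; Proposition \ref{peach} is untouched because it is a one-step statement. Setting $t = T + \kappa + 1$ as in Proposition \ref{prop:upper} yields the desired constant upper bound $C = C(\epsilon, \gamma, \eta, h)$.

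For the lower bound, I would start the chain at $X_0 = 0$ and show that $\|P(0, \cdot) - \pi\|_{TV}$ is bounded below by a positive constant $\epsilon_0 = \epsilon_0(\gamma, \eta, h)$ for all large $n$, yielding $t_{\mathrm{mix}}^{(n)}(\epsilon) \geq 2$ for $\epsilon < \epsilon_0$. The mean displacement is now tiny, $\sqrt{n-1}\,\lambda_1 s_1(0) = O(1)$ by hypothesis, so a direct mean-based Chebyshev argument as in Proposition \ref{prop:lower} cannot separate the two distributions. Instead, the separation comes from a variance mismatch: a direct computation with $X_1 \sim \mathrm{Hyp}(n-m, r, k)$ gives
\begin{equation*}
    \frac{\mathrm{Var}_0(X_1)}{\mathrm{Var}_\pi(X)} \longrightarrow \frac{1-h-\eta}{(1-\eta)(1-h)} < 1,
\end{equation*}
so after centering and rescaling by $\sqrt n$ the two laws concentrate as Gaussians of different widths. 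To formalize this, I would use the identity $\E_0[s_1^2(X_1)] = b_0 + b_1\lambda_1 + b_2\lambda_2$ to compute that $\mathrm{Var}_0[s(X_1)]$ converges to a constant strictly smaller than $\mathrm{Var}_\pi[s(X)] = (1-h)(1-\eta)/(h\eta)$, then, mimicking the two-set construction of Proposition \ref{prop:lower}, build an annulus $A$ on which $\pi$ puts most of its mass but $P(0, \cdot)$ puts much less, quantifying the gap via Chebyshev applied to $s$ under both measures.

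The main obstacle is the lower bound: the standard mean-based contraction is invisible at $t = 1$ in this regime, and one must detect a constant-order gap in second moments between a one-step distribution and the stationary one. The upper bound, by contrast, is primarily bookkeeping, since once $\lambda_1^2$ and $\lambda_2$ are both $O(1/n)$ the entire spectral and coupling machinery of Section \ref{sec:upperbound} goes through verbatim with $t_n$ replaced by any sufficiently large constant $T$.
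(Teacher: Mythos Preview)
Your upper bound is essentially the paper's argument. The paper introduces $q_n := \log n / |\log|\lambda_2||$ (with $q_n := 1$ when $\lambda_2 = 0$), reruns Proposition~\ref{thenword} verbatim with $q_n$ in place of $t_n$ (the only new input being the tautology $|\lambda_2|^{q_n} \le 1/n$), obtains $t_{\mathrm{mix}}^{(n)}(\epsilon) \leq q_n + C$, and then uses $\lambda_2 = \lambda_1^2 + O(1/n) = O(1/n)$ from Lemma~\ref{lem:Strawberry} and~(\ref{plantainrate}) to conclude that $q_n$ is bounded. Your replacement of $t_n$ by a fixed integer $T \geq 2$ is a cosmetic variant of the same idea.

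For the lower bound the paper does something much more direct than your variance argument: it writes $\|\delta_0 P - \pi\|_{TV} \geq |\delta_0 P(\{k\}) - \pi(\{k\})| = 1 - \pi(\{k\})$ and observes that the single hypergeometric atom $\pi(\{k\})$ tends to $0$, which (if one grants $\delta_0 P(\{k\}) = 1$) handles every $\epsilon \in (0,1)$ at once. Your route is genuinely different, and as written it has two gaps. First, a variance mismatch cannot be converted into a TV lower bound by ``Chebyshev applied to $s$ under both measures'': Chebyshev only produces \emph{upper} bounds on tail probabilities, so for any ball or annulus $A$ it bounds $\pi(A)$ and $\delta_0 P(A)$ from the same side, and no separation emerges. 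To distinguish two centered laws by variance you need a \emph{lower} bound on the tail of the wider one, which requires something beyond Chebyshev---either the local limit theorem you already have available from Proposition~\ref{peach}, or a fourth-moment/Paley--Zygmund estimate. Second, even once that is repaired, two asymptotically Gaussian laws with the same center and different scales have TV distance bounded strictly away from $1$, so the argument can only yield $t_{\mathrm{mix}}^{(n)}(\epsilon) \geq 2$ for $\epsilon$ below a fixed threshold $\epsilon_0(\gamma,\eta,h) < 1$, as you yourself note, and not for the full range $\epsilon \in (0,1)$ claimed in the theorem.
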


\begin{rem}
    Under Assumption \ref{plantain}, we no longer have cutoff at a multiple of $\log{n}$.
\end{rem}

We split the proof of Theorem \ref{thm:plantainbounds} across two subsections.

\subsection{The Lower Bound}

\begin{proof}
    We bound the total variation at time $t = 1$:
    \begin{align*}
        ||\delta_{0} P - \pi^{(n)}||_{TV} &\geq |\delta_0 P(\{k\}) - \pi^{(n)}(\{k\})| \\
        &= 1 - \pi^{(n)}(\{k\}) \\
        &= 1 - \frac{{r \choose k}{n-r \choose m-k}}{{n\choose m}}\\
        &= 1 - {r! m! (n-m)! (r-m)! \over k! n! (r-k)! (m-k)! (n-m-r+k)!}.
    \end{align*}
    Since $n,k,r,m$ are all of the same order, it is easy to see that the $n!$ term in the denominator dominates the fraction and send its limit to 0. Therefore, for any $\epsilon \in (0,1)$, we have for sufficiently large $n$ that $$||\delta_0 P - \pi^{(n)}||_{TV} \geq \epsilon.$$ This implies $t_{\mathrm{mix}}^{(n)}(\epsilon) \geq 2$.
\end{proof}

\subsection{The Upper Bound}

For this subsection, we introduce an alternative to $t_n$. Notice that the definition given in (\ref{orange}) is invalid under Assumption \ref{plantain}.

\begin{defn}
    When $\lambda_2 \neq 0$, let
    \begin{equation*}
        q_n = \frac{\log(n)}{|\log|\lambda_2||} = \frac{-\log(n)}{\log\modu{1 - \frac{2(n-1)}{m(n-m)}k + \frac{(n-1)(n-2)k(k-1)}{m(n-m)(m-1)(n-m-1)}}}.
    \end{equation*}
    When $\lambda_2 = 0$, let $q_n = 1$. 
\end{defn}

We will now prove an analogue of Proposition \ref{thenword} under Assumption \ref{plantain}.

\begin{prop}\label{thenword2}
    Suppose Assumption \ref{plantain} is satisfied. Then for any $\kappa \in \N$ such that
    \begin{align} \label{eq:kappacon2}
        \kappa^4\paren{1-{\gamma(1-2\gamma) \over h(1-h)}}^{\kappa} \leq {1 \over \kappa^2},
    \end{align}
    we have $$\P(\tau_{x,y}(\kappa) > q_n + \kappa) = O \left( \frac{1}{\kappa^2} \right).$$
\end{prop}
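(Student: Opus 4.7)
The plan is to replicate the structure of the proof of Proposition \ref{thenword}, replacing $t_n$ with $q_n$ everywhere and substituting the defining property of $q_n$ for the appeal to Lemma \ref{lem:eigenvals2}. We begin by decomposing
\begin{align*}
\P(\tau_{x,y}(\kappa) > q_n + \kappa) &\leq \P(X_{q_n}^x \notin \I_n(\kappa)) + \P(X_{q_n}^y \notin \I_n(\kappa)) \\
&\quad + \P(X_{q_n + \kappa}^x \notin \I_n(\kappa)) + \P(X_{q_n + \kappa}^y \notin \I_n(\kappa)) \\
&\quad + \P\paren{Y_{q_n}^x, Y_{q_n}^y \in \I_n(\kappa), \modu{Y_{q_n + \kappa}^x - Y_{q_n + \kappa}^y} > \tfrac{\sqrt{n}}{\kappa^3}}
\end{align*}
exactly as in the previous proposition, using that $X$ and $Y$ share marginals.

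Next, I would treat the four marginal escape probabilities. Chebyshev's inequality applied to $s_1^2(X_t)$ as before yields
\[
\P_z(X_t \notin \I_n(\kappa)) \leq \frac{1}{\kappa^2} \cdot \frac{r^4}{n^3}\paren{b_0 + b_1 \lambda_1^t s_1(z) + b_2 \lambda_2^t s_2(z)}
\]
for any starting state $z$ and $t \in \{q_n, q_n + \kappa\}$. One checks, as before, that $r^4/n^3 = O(n)$, that $b_0, b_1 = O(1/n)$, $b_2 = O(1)$, and that $s_1(z), s_2(z)$ are bounded; since $|\lambda_1| \leq 1$, the first two summands contribute $O(1/n)$. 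The key replacement occurs in the $\lambda_2$ term: if $\lambda_2 = 0$ then $q_n = 1$ by convention and $\lambda_2^t = 0$ for $t \geq 1$, while if $\lambda_2 \neq 0$ the very definition of $q_n$ forces $|\lambda_2|^{q_n} = 1/n$, and then $|\lambda_2| \leq 1$ gives $|\lambda_2|^{q_n + \kappa} \leq 1/n$ as well. In either case, each escape probability is $O(1/\kappa^2)$.

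The coupling term is bounded by a verbatim reuse of Lemma \ref{lem:coupling}, giving
\[
\P\paren{Y_{q_n}^x, Y_{q_n}^y \in \I_n(\kappa), \modu{Y_{q_n + \kappa}^x - Y_{q_n + \kappa}^y} > \tfrac{\sqrt{n}}{\kappa^3}} \leq 2\kappa^4 \paren{1 - \frac{k(n - 2k)}{m(n - m)}}^{\kappa},
\]
and since the latter contraction ratio tends to $\gamma(1 - 2\gamma)/(h(1 - h))$ under Assumption \ref{plantain}, hypothesis (\ref{eq:kappacon2}) produces the desired $O(1/\kappa^2)$ bound, just as (\ref{eq:kappacon}) did in the non-critical case.

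The main obstacle is bookkeeping around the degenerate behaviour of $\lambda_2$ in the critical regime: a direct computation shows $\lambda_2 \to 0$ under Assumption \ref{plantain}, so $q_n$ itself may be very small or fall into the exceptional $\lambda_2 = 0$ branch of its definition. One must therefore interpret $q_n$ as an integer number of steps (say $\lceil q_n \rceil$) while preserving the estimate $|\lambda_2|^{q_n} \leq 1/n$, and verify that $q_n + \kappa \geq 1$ so that the escape-probability bound is not vacuous. Once that has been arranged, the remaining argument is a mechanical translation of the proof of Proposition \ref{thenword}.
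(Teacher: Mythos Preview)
Your proposal is correct and follows essentially the same approach as the paper: the paper's proof simply states that the argument is identical to that of Proposition \ref{thenword} except that the bound $\lambda_2^{q_n}=O(1/n)$ now comes directly from the definition of $q_n$ (with the $\lambda_2=0$ case handled trivially), which is exactly what you do. Your extra remarks about integer rounding of $q_n$ are not addressed in the paper either, so there is no discrepancy.
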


\begin{proof}
    The the proof is identical to that of Proposition \ref{thenword}, except that we bound
    $$\P_z(X_{q_n} \not\in \I_n(\kappa)) = {1 \over \kappa^2}{r^4 \over n^3} \paren{b_0 + b_1 \lambda_1^{q_n}s_1(z) + b_2 \lambda_2^{q_n}s_2(z)}.$$
    We need that $\lambda_2^{q_n} = O \left( {1 \over n} \right)$. If $\lambda_2 = 0$, then any asymptotic bound is trivially satisfied. If $\lambda_2 \neq 0$, then
    \begin{align*}
        \lambda_2^{q_n} &\leq |\lambda_2|^{q_n} \\
        &= \exp\paren{\log|\lambda_2| \frac{\log(n)}{|\log|\lambda_2||}} \\
        &= \frac{1}{n}.
    \end{align*}
    This proves the proposition.
\end{proof}

Following the argument of Section \ref{sec:upperbound}, we arrive at an analogue of Proposition \ref{prop:upper}.

\begin{prop} \label{prop:criticalupper}
    Let $\{X_t^{(n)}\}_n$ be a sequence of generalized Bernoulli-Laplace chains satisfying Assumption \ref{plantain}. Then there exist constants $N := N(\epsilon, \gamma, \eta, h)$ and $C := C(\epsilon, \gamma, \eta, h)$ such that for all $n \geq N$, $$t_{\mathrm{mix}}^{(n)}(\epsilon) \leq q_n + C.$$
\end{prop}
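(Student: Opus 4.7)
The plan is to transcribe the proof of Proposition \ref{prop:upper} with $q_n$ substituted for $t_n$ throughout. The substitute for Proposition \ref{thenword} has already been furnished by the freshly established Proposition \ref{thenword2}, and the one-step total variation bound of Proposition \ref{peach} will be reused without modification. All the spectral algebra and coupling constructions from Section \ref{sec:upperbound} are algebraic identities or direct calculations that never invoked the critical condition $\gamma \neq h(1-h)$, so they remain available.

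First I would fix $\kappa \in \N$ satisfying (\ref{eq:kappacon2}) and set $t := q_n + \kappa + 1$. Beginning from the standard reduction $\max_{x \in \X} \|\delta_x P^t - \pi\|_{TV} \leq 2 \max_{x,y \in \X} \|\delta_x P^t - \delta_y P^t\|_{TV}$, I would bound the right-hand side for arbitrary $x, y \in \X$ and $A \subset \X$ by passing to the coupling $(Y_t^x, Y_t^y)$ of Lemma \ref{lem:coupling} and applying the strong Markov property at the hitting time $\tau_{x,y}(\kappa)$. This yields
\[
|\P_x(Y_t \in A) - \P_y(Y_t \in A)| \leq 2\P(\tau_{x,y}(\kappa) > t-1) + \max_{\substack{z,w \in F_n(\kappa) \\ s \in \{1, \dots, t\}}} |\P_z(Y_s \in A) - \P_w(Y_s \in A)|.
\]
The first term is $O(1/\kappa^2)$ by Proposition \ref{thenword2}. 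For the second, Proposition \ref{peach} yields $\|\delta_z P - \delta_w P\|_{TV} = O(1/\kappa^2)$ uniformly in $(z,w) \in F_n(\kappa)$, and since total variation distance is non-expanding under Markov kernels, the same $O(1/\kappa^2)$ bound persists for every $s \geq 1$. Assembling these pieces gives $\max_x \|\delta_x P^t - \pi\|_{TV} \leq C_3/\kappa^2$ for some constant $C_3$ independent of $n$; then enlarging $\kappa$ (while preserving (\ref{eq:kappacon2})) so that $C_3/\kappa^2 \leq \epsilon$ yields $t_{\mathrm{mix}}^{(n)}(\epsilon) \leq q_n + \kappa + 1$, and setting $C := \kappa + 1$ completes the proof.

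The main obstacle worth flagging is that Proposition \ref{peach} is formally stated under Assumption \ref{banana}, which explicitly excludes the critical case $\gamma = h(1-h)$. I expect, however, that its proof transfers with no modification: inspecting the argument, the hypothesis $\gamma \neq h(1-h)$ is never actually invoked; the key inputs are the asymptotics $\sigma^x, \sigma^y \sim \sqrt{n\eta(1-\eta)\gamma(1-\gamma)}$, the constraint $(x,y) \in F_n(\kappa)$, Lemma \ref{eggplant}, and the hypergeometric local limit theorem, all of which remain valid under Assumption \ref{plantain}. Once this transferability is carefully verified, the remainder is a pure rewrite of Proposition \ref{prop:upper}'s proof.
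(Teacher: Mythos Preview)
Your proposal is correct and matches the paper's approach exactly: the paper gives no separate proof of Proposition \ref{prop:criticalupper}, merely stating that ``following the argument of Section \ref{sec:upperbound}, we arrive at an analogue of Proposition \ref{prop:upper},'' which is precisely the transcription you outline. Your observation that Proposition \ref{peach} never actually uses the hypothesis $\gamma \neq h(1-h)$ is also correct and is a point the paper silently assumes.
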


We now employ this proposition with the rate of convergence assumption (\ref{plantainrate}) to prove the upper bound of Theorem \ref{thm:plantainbounds}.

\begin{proof}
     It suffices to show that $q_n$ is bounded when Assumption \ref{plantain} holds. This is trivially satisfied for all $n$ such that $\lambda_2 = 0$, so assume $\modu{\lambda_2} \neq 0$. Using Lemma \ref{lem:Strawberry}, we see that
     \begin{align*}
         \lambda_2 &= \lambda_1^2 + O\left({1 \over n}\right) \\
         &= O\left({1 \over \sqrt{n}}\right)^2 + O\left({1 \over n}\right) \\
         &= O\left({1 \over n }\right).
     \end{align*}
     That is to say, there exists a $K$ such that $\modu{\lambda_2} \leq {K \over n}$. Thus $$\log\modu{\lambda_2} \leq \log(K) - \log(n).$$ Consider only $n$ large enough such that $\log(K) - \log(n) < 0.$ Then $$\modu{\log|\lambda_2|} \geq \log(n) - \log(K).$$ This implies that $$q_n = \frac{\log(n)}{\modu{\log|\lambda_2|}} \leq \frac{\log(n)}{\log(n) - \log(K)},$$ which is clearly bounded.
\end{proof}

\section{Conclusion} \label{sec:disc}

\subsection{Generalizations}
Theorem \ref{thm:main} generalizes pre-existing work on the two-color, two-urn Bernoulli-Laplace model. In particular, we extend the work from \cite{N19} on the lower bound and \cite{A22} on the upper bound to a model with uneven distributions of colors and urn sizes. We also show that mixing time is bounded under certain conditions in Theorem \ref{thm:plantainbounds}, which we believe to be a new result.

Another possible generalization comes from letting there be more than 2 colors. The complete spectrum of the Markov transition in this case is still known (see \cite{K09}), but the more complicated state space is harder to work with, and many proofs (e.g. that of Proposition \ref{peach}) breaks down.

Another case still unproven (though analogous to \cite{EN20}) is when $\gamma = 0$. We put forth the following conjecture, which is the analogue of the result in \cite{N19} and notably lacks constant window.
\begin{conj}
    Let $\{X_t^{(n)}\}_n$ be a sequence of Bernoulli-Laplace chains. For each $n$, assume without loss of generality that $r,m \leq \frac{n}{2}$. Suppose $\limn {k \over n} = 0$, $\limn {m \over n } = h$ and $\limn {r \over n} = \eta$. Then there exists constants $N,c,c',$ and $c''$ all depending on $\varepsilon,\eta,\gamma,$ and $h$ such that for all $n \geq N$,
    \begin{equation*}
        \frac{n \log n}{c(\varepsilon, \eta, \gamma, h) k} - \frac{k}{n}c'(\varepsilon, \eta, \gamma, h) \leq t_{mix}(\varepsilon) \leq t_n + \frac{c''(\varepsilon, \eta, \gamma, h) n}{k}\log \log n.
    \end{equation*}
\end{conj}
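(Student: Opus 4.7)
The plan is to adapt the eigenfunction-based lower bound of Proposition \ref{prop:lower} and the two-stage coupling upper bound of Section \ref{sec:upperbound} to the $\gamma = 0$ regime, carefully tracking how all asymptotic estimates rescale when $k = o(n)$. Throughout I would also add the natural hypothesis $k \to \infty$ so that the normal approximations used in Proposition \ref{peach} remain meaningful.

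For the lower bound, I would start the chain at $X_0 = 0$ and use $s(x) = \sqrt{n-1}\, s_1(x)$ exactly as in Proposition \ref{prop:lower}. When $\gamma = 0$ we have $\lambda_1 \approx 1 - k/[nh(1-h)]$, so at time $t = \frac{n h(1-h)\log n}{2k} - c$ the mean $\E_0[s(X_t)] = \sqrt{n-1}\,\lambda_1^t s_1(0)$ remains of constant order $\exp(c/[h(1-h)])$. Bounding both $\mathrm{Var}_0(s(X_t))$ and $\mathrm{Var}_\pi(s(X_t))$ reduces to controlling $\lambda_2^{t_n}$, and Lemma \ref{lem:Strawberry} still gives $\lambda_2 \approx \lambda_1^2$, so an analogue of Lemma \ref{lem:eigenvals2} applies with only bookkeeping changes. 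A Chebyshev-based set separation identical in structure to that of Proposition \ref{prop:lower} then yields a lower bound of the form $\tfrac{n\log n}{ck} - O(\tfrac{k}{n})$ with $c = 2/[h(1-h)]$, where the correction term absorbs all separation constants arising from the choice of $\alpha$ and $d$.

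For the upper bound I would mirror Section \ref{sec:upperbound}, but scale the auxiliary parameter $\kappa$ with $n$ rather than keep it constant. Lemma \ref{lem:coupling} gives per-step contraction coefficient $\approx 1 - k/(nh)$, so $\kappa$ steps contract pairwise distance by $\exp(-\kappa k/(nh))$. If the eigenfunction decay on $[0,t_n]$ already compresses $\mathcal{I}_n(\kappa)$ to scale $\sqrt n$, then an additional $\kappa \sim (n/k)\log\log n$ coupling steps should bring pairs into $F_n(\kappa)$. I would then re-derive Proposition \ref{peach} tracking that $\sigma \sim \sqrt{k\eta(1-\eta)}$ rather than $\Theta(\sqrt n)$, so that a single step from within $F_n(\kappa)$ reduces TV distance by $O(1/\kappa^2)$. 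The strong Markov assembly of Proposition \ref{prop:upper} then yields the claimed upper bound $t_n + c'' (n/k)\log\log n$.

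The main obstacle will be saving the $\log\log n$ rather than $\log n$ factor in the window. A naive application of the Lemma \ref{lem:coupling} contraction starting from distance $O(n)$ requires $\Omega((n/k)\log n)$ steps to reach the $\sqrt n/\kappa^3$ scale, which overshoots by a full $\log n$; obtaining the sharper $\log\log n$ window will presumably require a multi-stage bootstrap coupling in which each round shrinks the target distance geometrically (analogous to the Eberle-type refinement invoked in \cite{EN20}), so that the path coupling is only deployed on the final $\log\log n$-scale gap after the eigenfunction decay during $[0,t_n]$ has done the bulk of the contraction. A secondary technical burden is rerunning the local limit theorem and the estimates of $\mathcal{T}_1,\ldots,\mathcal{T}_5$ in Proposition \ref{peach} with $\sigma^2 \sim k\eta(1-\eta)$ in place of an $\Theta(n)$ variance: the comparison error from \cite{L07} now scales as $1/\sqrt k$ rather than $1/\sqrt n$, and several of the Chebyshev-and-integral-comparison bounds on $\mathcal{T}_2,\mathcal{T}_3$ must be re-derived tracking $k$ and $n$ separately, which is the most delicate calculation in the program.
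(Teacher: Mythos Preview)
The statement you are attempting to prove is labeled a \emph{Conjecture} in the paper, and the paper contains no proof of it. In Section~\ref{sec:disc} the authors explicitly write that the case $\gamma = 0$ ``is still unproven (though analogous to \cite{EN20})'' and that they ``put forth the following conjecture.'' So there is no paper proof for your proposal to be compared against; any complete argument you supply would go beyond what the paper establishes.

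That said, your outline is a reasonable sketch of how one might try to extend the methods of Sections~\ref{sec:lowerbound} and~\ref{sec:upperbound}. You have correctly identified the two main difficulties the paper leaves open in this regime: (i) the local limit theorem underlying Proposition~\ref{peach} must be reworked with $\sigma^2 \sim k\eta(1-\eta)$ rather than $\Theta(n)$, so the error terms from \cite{L07} and Lemma~\ref{eggplant} become $O(1/\sqrt{k})$; and (ii) the path-coupling contraction of Lemma~\ref{lem:coupling} has rate $1 - \Theta(k/n)$, so a constant $\kappa$ no longer suffices and the window must grow with $n$. Your diagnosis that a multi-stage or bootstrap coupling is needed to recover a $\log\log n$ rather than $\log n$ window is exactly the kind of refinement the analogy to \cite{EN20} suggests, but the paper does not carry this out. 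In short, your plan is plausible as a research program, but it is not a comparison target: the paper simply poses the bound as an open problem.
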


The last generalization we propose is to let $k$ and $r$ be random variables such that convergence ${k \over n} \to \gamma$ and ${r \over n} \to\eta$ in distribution. We are not aware of any previous literature on such a model.

\subsection{Numerical Results}

The asymptotic behavior proven in Theorems \ref{thm:main} and \ref{thm:plantainbounds} is readily observed in numeric data, which throughout this section is exact. The mixing times for two sequences of Bernoulli-Laplace chains satisfying Assumption \ref{banana} are plotted in Figures \ref{fig:balls1} and \ref{fig:balls2}. Notice that they resemble a constant multiple of $\log n$.

Figure \ref{fig:balls3} displays the mixing times for a sequence of Bernoulli-Laplace chains satisfying Assumption \ref{plantain}. Notice that they appear constant.

\begin{figure}[H]
    \centering
    \includegraphics[width=10cm]{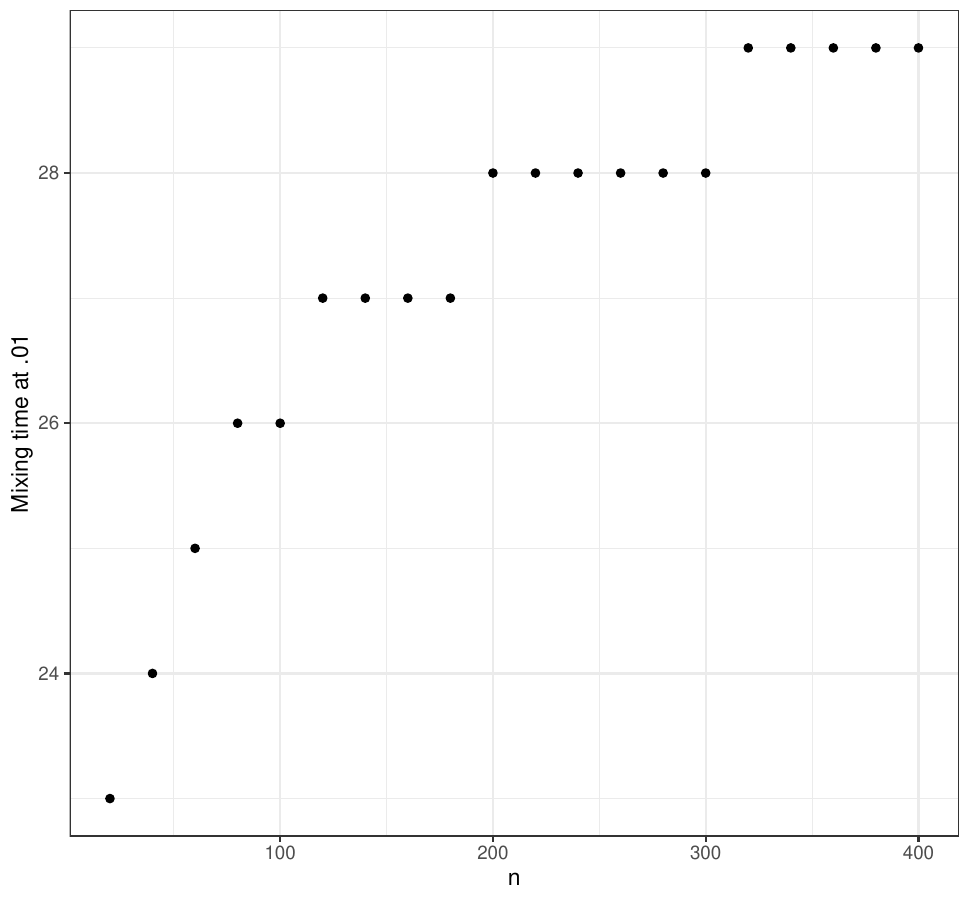}
    \caption{$t_{\mathrm{mix}}^{(n)}(0.01)$ when $\frac{k}{n} = 0.05, \frac{r}{n} = 0.40, m=r$}
    \label{fig:balls1}
\end{figure}

\begin{figure}[H]
    \centering
    \includegraphics[width=10cm]{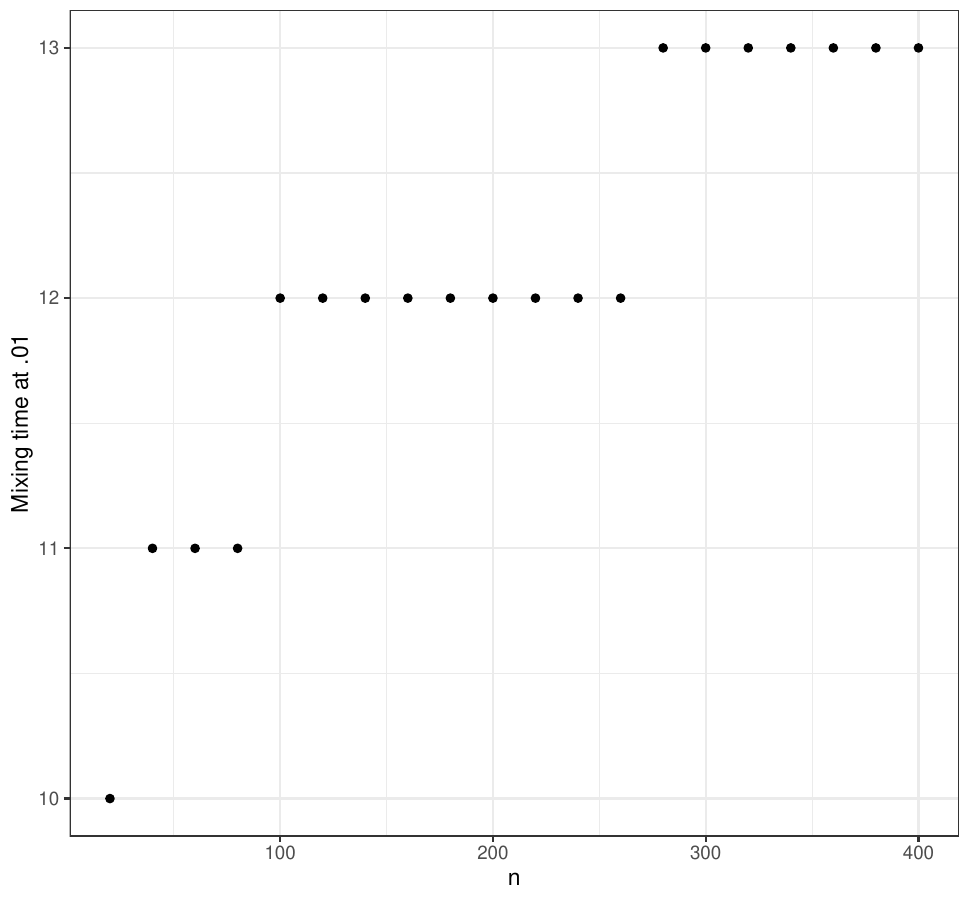}
    \caption{$t_{\mathrm{mix}}^{(n)}(0.01)$ when $\frac{k}{n} = 0.10, \frac{r}{n} = 0.40, m=r$}
    \label{fig:balls2}
\end{figure}

\begin{figure}[H]
    \centering
    \includegraphics[width=10cm]{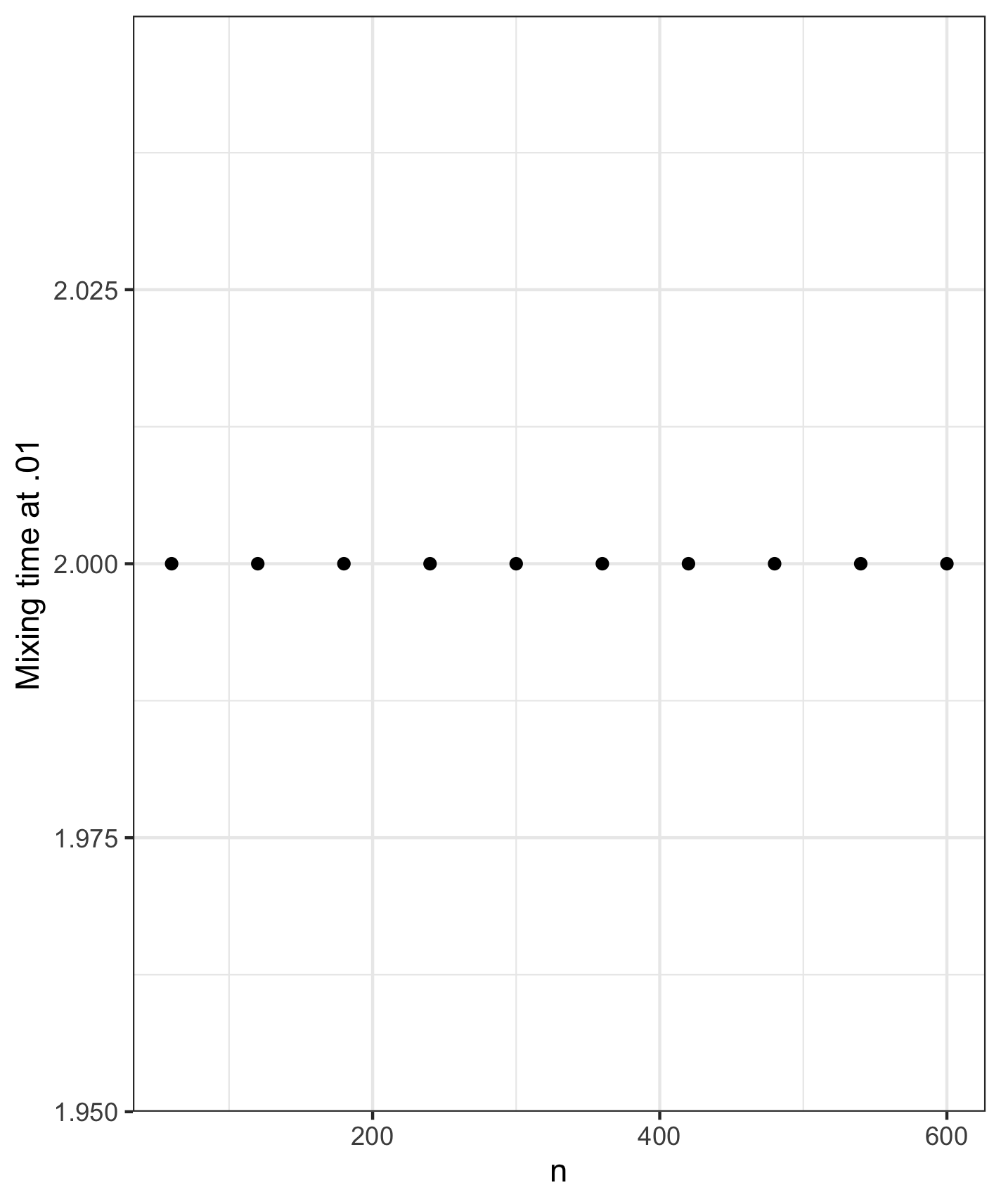}
    \caption{$t_{\mathrm{mix}}^{(n)}(0.01)$ when $\frac{k}{n} = 0.25, \frac{r}{n} = 0.50, m=r$}
    \label{fig:balls3}
\end{figure}

In Tables 1 and 2, we vary ${k \over n}$ and ${r \over n}$, respectively, while fixing $m = r$. In Table 3, we vary ${m \over n}$. 

Notice in Table 1 that the mixing times appear to reach a minimum at ${k \over n} = {r \over n}(1 - {r \over n})$. This makes sense in light of Theorems \ref{thm:main} and \ref{thm:plantainbounds}. For very low values of ${k \over n}$ (closer to $0$) and very high values of ${k \over n}$ (closer to ${r \over n}$), the mixing times quickly increase.

However, as shown in Tables 2 and 3, mixing times increase monotonically in ${m \over n}$. This makes sense given the behavior of $t_n$ under the same conditions. Tables 2 and 3 are so similar because the corresponding rows in each table differ only by their values of $\eta$, and as Theorem \ref{thm:main} proves, this allows for at most bounded difference in the mixing times. The data does consistently show, though, that mixing occurs faster $\eta = 0.50$ than $\eta < 0.50$.

\begin{table}\label{tab1}
\scalebox{0.75}{\hspace{-24mm}  
\begin{tabular}{|L|L|L|L|L|L|L|L|L|L|L|L|L|L|L|L|L|L|L|L|L|}
  \hline 
 $(k/n, n)$ & 50 & 100 & 150 & 200 & 250 & 300 & 350 & 400 & 450 & 500 & 550 & 600 & 650 & 700 & 750 & 800 & 850 & 900 & 950 & 1000 \\ 
  \hline 
  0.02 & 68 & 72 & 75 & 76 & 78 & 79 & 80 & 81 & 81 & 82 & 83 & 83 & 84 & 84 & 84 & 85 & 85 & 85 & 86 & 86 \\ 
  0.04 & 33 & 35 & 36 & 37 & 37 & 38 & 38 & 39 & 39 & 39 & 40 & 40 & 40 & 40 & 41 & 41 & 41 & 41 & 41 & 41 \\ 
  0.06 & 21 & 22 & 23 & 24 & 24 & 24 & 25 & 25 & 25 & 25 & 25 & 26 & 26 & 26 & 26 & 26 & 26 & 26 & 26 & 27 \\ 
  0.08 & 15 & 16 & 17 & 17 & 17 & 17 & 18 & 18 & 18 & 18 & 18 & 18 & 18 & 19 & 19 & 19 & 19 & 19 & 19 & 19 \\ 
  0.10 & 12 & 12 & 13 & 13 & 13 & 13 & 13 & 14 & 14 & 14 & 14 & 14 & 14 & 14 & 14 & 14 & 14 & 14 & 14 & 14 \\ 
  0.12 & 9 & 10 & 10 & 10 & 10 & 10 & 11 & 11 & 11 & 11 & 11 & 11 & 11 & 11 & 11 & 11 & 11 & 11 & 11 & 11 \\ 
  0.14 & 7 & 8 & 8 & 8 & 8 & 8 & 9 & 9 & 9 & 9 & 9 & 9 & 9 & 9 & 9 & 9 & 9 & 9 & 9 & 9 \\ 
  0.16 & 6 & 6 & 7 & 7 & 7 & 7 & 7 & 7 & 7 & 7 & 7 & 7 & 7 & 7 & 7 & 7 & 7 & 7 & 7 & 7 \\ 
  0.18 & 5 & 5 & 5 & 5 & 6 & 6 & 6 & 6 & 6 & 6 & 6 & 6 & 6 & 6 & 6 & 6 & 6 & 6 & 6 & 6 \\ 
  0.20 & 4 & 4 & 4 & 4 & 5 & 5 & 5 & 5 & 5 & 5 & 5 & 5 & 5 & 5 & 5 & 5 & 5 & 5 & 5 & 5 \\ 
  0.22 & 3 & 3 & 3 & 3 & 4 & 4 & 4 & 4 & 4 & 4 & 4 & 4 & 4 & 4 & 4 & 4 & 4 & 4 & 4 & 4 \\ 
  0.24 & 2 & 2 & 2 & 2 & 3 & 3 & 3 & 3 & 3 & 3 & 3 & 3 & 3 & 3 & 3 & 3 & 3 & 3 & 3 & 3 \\ 
  0.26 & 2 & 2 & 2 & 2 & 3 & 3 & 3 & 3 & 3 & 3 & 3 & 3 & 3 & 3 & 3 & 3 & 3 & 3 & 3 & 3 \\ 
  0.28 & 3 & 3 & 3 & 3 & 4 & 4 & 4 & 4 & 4 & 4 & 4 & 4 & 4 & 4 & 4 & 4 & 4 & 4 & 4 & 4 \\ 
  0.30 & 4 & 4 & 4 & 4 & 5 & 5 & 5 & 5 & 5 & 5 & 5 & 5 & 5 & 5 & 5 & 5 & 5 & 5 & 5 & 5 \\ 
  0.32 & 5 & 5 & 5 & 5 & 6 & 6 & 6 & 6 & 6 & 6 & 6 & 6 & 6 & 6 & 6 & 6 & 6 & 6 & 6 & 6 \\ 
  0.34 & 6 & 6 & 7 & 7 & 7 & 7 & 7 & 7 & 7 & 7 & 7 & 7 & 7 & 7 & 7 & 7 & 7 & 7 & 7 & 7 \\ 
  0.36 & 7 & 8 & 8 & 8 & 8 & 8 & 9 & 9 & 9 & 9 & 9 & 9 & 9 & 9 & 9 & 9 & 9 & 9 & 9 & 9 \\ 
  0.38 & 9 & 10 & 10 & 10 & 10 & 10 & 11 & 11 & 11 & 11 & 11 & 11 & 11 & 11 & 11 & 11 & 11 & 11 & 11 & 11 \\ 
  0.40 & 12 & 12 & 13 & 13 & 13 & 13 & 13 & 14 & 14 & 14 & 14 & 14 & 14 & 14 & 14 & 14 & 14 & 14 & 14 & 14 \\ 
  0.42 & 15 & 16 & 17 & 17 & 17 & 17 & 18 & 18 & 18 & 18 & 18 & 18 & 18 & 19 & 19 & 19 & 19 & 19 & 19 & 19 \\ 
  0.44 & 21 & 22 & 23 & 24 & 24 & 24 & 25 & 25 & 25 & 25 & 25 & 26 & 26 & 26 & 26 & 26 & 26 & 26 & 26 & 27 \\ 
  0.46 & 33 & 35 & 36 & 37 & 37 & 38 & 38 & 39 & 39 & 39 & 40 & 40 & 40 & 40 & 41 & 41 & 41 & 41 & 41 & 41 \\ 
  0.48 & 68 & 72 & 75 & 76 & 78 & 79 & 80 & 81 & 81 & 82 & 83 & 83 & 84 & 84 & 84 & 85 & 85 & 85 & 86 & 86 \\ 
   0.50 & +\infty & +\infty & +\infty & +\infty & +\infty & +\infty & +\infty & +\infty & +\infty & +\infty & +\infty & +\infty & +\infty & +\infty & +\infty & +\infty & +\infty & +\infty & +\infty & +\infty \\
  \hline
\end{tabular}}

\caption{$t_{\mathrm{mix}}^{(n)}(0.01)$ when $\frac{k}{n} \in \{0.02,0.04,\dots,0.48,0.50\}, \frac{r}{n} = 0.50$, $m=r$} 
\end{table}




\begin{table}
\scalebox{0.75}{\hspace{-12mm} \begin{tabular}{|L|L|L|L|L|L|L|L|L|L|L|L|L|L|L|L|L|L|L|L|L|}
  \hline
({r \over n},n) & 50 & 100 & 150 & 200 & 250 & 300 & 350 & 400 & 450 & 500 & 550 & 600 & 650 & 700 & 750 & 800 & 850 & 900 & 950 & 1000 \\ 
  \hline
0.02 & 2 & 2 & 2 & 2 & 2 & 2 & 2 & 2 & 2 & 2 & 2 & 2 & 2 & 2 & 2 & 2 & 2 & 2 & 2 & 2 \\ 
  0.04 & 8 & 8 & 9 & 9 & 9 & 10 & 10 & 10 & 10 & 10 & 10 & 10 & 10 & 10 & 10 & 10 & 10 & 10 & 10 & 10 \\ 
  0.06 & 13 & 14 & 15 & 15 & 15 & 15 & 16 & 16 & 16 & 16 & 16 & 16 & 16 & 16 & 16 & 17 & 17 & 17 & 17 & 17 \\ 
  0.08 & 19 & 20 & 20 & 21 & 21 & 21 & 21 & 22 & 22 & 22 & 22 & 22 & 22 & 22 & 23 & 23 & 23 & 23 & 23 & 23 \\ 
  0.10 & 23 & 24 & 25 & 26 & 26 & 26 & 27 & 27 & 27 & 27 & 28 & 28 & 28 & 28 & 28 & 28 & 29 & 29 & 29 & 29 \\ 
  0.12 & 28 & 29 & 30 & 31 & 31 & 32 & 32 & 32 & 33 & 33 & 33 & 33 & 33 & 34 & 34 & 34 & 34 & 34 & 34 & 35 \\ 
  0.14 & 31 & 33 & 34 & 35 & 36 & 36 & 37 & 37 & 38 & 38 & 38 & 38 & 39 & 39 & 39 & 39 & 39 & 40 & 40 & 40 \\ 
  0.16 & 36 & 38 & 39 & 40 & 41 & 41 & 41 & 42 & 42 & 43 & 43 & 43 & 43 & 44 & 44 & 44 & 44 & 44 & 45 & 45 \\ 
  0.18 & 39 & 42 & 43 & 44 & 45 & 45 & 46 & 46 & 47 & 47 & 47 & 48 & 48 & 48 & 49 & 49 & 49 & 49 & 49 & 50 \\ 
  0.20 & 42 & 45 & 47 & 48 & 49 & 49 & 50 & 50 & 51 & 51 & 52 & 52 & 52 & 53 & 53 & 53 & 53 & 54 & 54 & 54 \\ 
  0.22 & 46 & 49 & 50 & 52 & 52 & 53 & 54 & 54 & 55 & 55 & 56 & 56 & 56 & 57 & 57 & 57 & 57 & 58 & 58 & 58 \\ 
  0.24 & 49 & 52 & 54 & 55 & 56 & 57 & 57 & 58 & 58 & 59 & 59 & 60 & 60 & 60 & 61 & 61 & 61 & 62 & 62 & 62 \\ 
  0.26 & 52 & 55 & 57 & 58 & 59 & 60 & 61 & 61 & 62 & 62 & 63 & 63 & 64 & 64 & 64 & 65 & 65 & 65 & 65 & 66 \\ 
  0.28 & 54 & 58 & 60 & 61 & 62 & 63 & 64 & 64 & 65 & 66 & 66 & 66 & 67 & 67 & 67 & 68 & 68 & 68 & 69 & 69 \\ 
  0.30 & 57 & 60 & 62 & 64 & 65 & 66 & 67 & 67 & 68 & 68 & 69 & 69 & 70 & 70 & 70 & 71 & 71 & 71 & 72 & 72 \\ 
  0.32 & 59 & 63 & 65 & 66 & 67 & 68 & 69 & 70 & 70 & 71 & 71 & 72 & 72 & 73 & 73 & 73 & 74 & 74 & 74 & 75 \\ 
  0.34 & 61 & 65 & 67 & 68 & 69 & 71 & 71 & 72 & 73 & 73 & 74 & 74 & 75 & 75 & 75 & 76 & 76 & 76 & 77 & 77 \\ 
  0.36 & 63 & 66 & 69 & 70 & 72 & 72 & 73 & 74 & 75 & 75 & 76 & 76 & 77 & 77 & 78 & 78 & 78 & 79 & 79 & 79 \\ 
  0.38 & 64 & 68 & 70 & 72 & 73 & 74 & 75 & 76 & 76 & 77 & 78 & 78 & 79 & 79 & 79 & 80 & 80 & 80 & 81 & 81 \\ 
  0.40 & 65 & 69 & 72 & 73 & 75 & 76 & 76 & 77 & 78 & 79 & 79 & 80 & 80 & 80 & 81 & 81 & 82 & 82 & 82 & 83 \\ 
  0.42 & 66 & 70 & 73 & 74 & 76 & 77 & 78 & 79 & 79 & 80 & 80 & 81 & 81 & 82 & 82 & 83 & 83 & 83 & 84 & 84 \\ 
  0.44 & 67 & 71 & 74 & 75 & 77 & 78 & 79 & 79 & 80 & 81 & 81 & 82 & 82 & 83 & 83 & 84 & 84 & 84 & 85 & 85 \\ 
  0.46 & 68 & 72 & 74 & 76 & 77 & 78 & 79 & 80 & 81 & 81 & 82 & 82 & 83 & 83 & 84 & 84 & 85 & 85 & 85 & 86 \\ 
  0.48 & 68 & 72 & 75 & 76 & 78 & 79 & 80 & 80 & 81 & 82 & 82 & 83 & 83 & 84 & 84 & 85 & 85 & 85 & 86 & 86 \\ 
  0.50 & 68 & 72 & 75 & 76 & 78 & 79 & 80 & 81 & 81 & 82 & 83 & 83 & 84 & 84 & 84 & 85 & 85 & 85 & 86 & 86 \\ 
   \hline
\end{tabular}\label{tab2}}

\caption{$t_{\mathrm{mix}}^{(n)}(0.01)$ when $\frac{k}{n} = 0.02, \frac{r}{n} \in \{0.02,0.04,\dots,0.48,0.50\}$, $m=r$}

\end{table}

\begin{table}
\centering
\scalebox{0.75}{\hspace{-12mm} \begin{tabular}{|L|L|L|L|L|L|L|L|L|L|L|L|L|L|L|L|L|L|L|L|L|}
  \hline
 ({m \over n},n) & 50 & 100 & 150 & 200 & 250 & 300 & 350 & 400 & 450 & 500 & 550 & 600 & 650 & 700 & 750 & 800 & 850 & 900 & 950 & 1000 \\ 
  \hline
  0.02 & 2 & 2 & 2 & 2 & 2 & 2 & 2 & 2 & 2 & 2 & 2 & 2 & 2 & 2 & 2 & 2 & 2 & 2 & 2 & 2 \\ 
  0.04 & 6 & 6 & 7 & 7 & 7 & 7 & 7 & 7 & 7 & 8 & 8 & 8 & 8 & 8 & 8 & 8 & 8 & 8 & 8 & 8 \\ 
  0.06 & 10 & 11 & 12 & 12 & 12 & 12 & 12 & 13 & 13 & 13 & 13 & 13 & 13 & 13 & 13 & 13 & 13 & 14 & 14 & 14 \\ 
  0.08 & 14 & 15 & 16 & 17 & 17 & 17 & 17 & 18 & 18 & 18 & 18 & 18 & 18 & 19 & 19 & 19 & 19 & 19 & 19 & 19 \\ 
  0.10 & 19 & 20 & 21 & 21 & 22 & 22 & 22 & 23 & 23 & 23 & 23 & 24 & 24 & 24 & 24 & 24 & 24 & 24 & 24 & 25 \\ 
  0.12 & 22 & 24 & 25 & 26 & 26 & 27 & 27 & 28 & 28 & 28 & 28 & 29 & 29 & 29 & 29 & 29 & 29 & 29 & 30 & 30 \\ 
  0.14 & 27 & 28 & 30 & 30 & 31 & 31 & 32 & 32 & 33 & 33 & 33 & 33 & 34 & 34 & 34 & 34 & 34 & 35 & 35 & 35 \\ 
  0.16 & 30 & 32 & 34 & 35 & 35 & 36 & 36 & 37 & 37 & 37 & 38 & 38 & 38 & 39 & 39 & 39 & 39 & 39 & 39 & 40 \\ 
  0.18 & 34 & 36 & 38 & 39 & 40 & 40 & 41 & 41 & 42 & 42 & 42 & 43 & 43 & 43 & 43 & 44 & 44 & 44 & 44 & 44 \\ 
  0.20 & 37 & 40 & 42 & 43 & 44 & 44 & 45 & 45 & 46 & 46 & 47 & 47 & 47 & 47 & 48 & 48 & 48 & 48 & 49 & 49 \\ 
  0.22 & 41 & 44 & 45 & 46 & 47 & 48 & 49 & 49 & 50 & 50 & 51 & 51 & 51 & 52 & 52 & 52 & 52 & 53 & 53 & 53 \\ 
  0.24 & 44 & 47 & 49 & 50 & 51 & 52 & 52 & 53 & 54 & 54 & 54 & 55 & 55 & 55 & 56 & 56 & 56 & 57 & 57 & 57 \\ 
  0.26 & 47 & 50 & 52 & 53 & 54 & 55 & 56 & 57 & 57 & 58 & 58 & 58 & 59 & 59 & 59 & 60 & 60 & 60 & 61 & 61 \\ 
  0.28 & 50 & 53 & 55 & 57 & 58 & 59 & 59 & 60 & 60 & 61 & 61 & 62 & 62 & 63 & 63 & 63 & 64 & 64 & 64 & 64 \\ 
  0.30 & 53 & 56 & 58 & 59 & 61 & 62 & 62 & 63 & 64 & 64 & 65 & 65 & 65 & 66 & 66 & 66 & 67 & 67 & 67 & 68 \\ 
  0.32 & 55 & 59 & 61 & 62 & 63 & 64 & 65 & 66 & 66 & 67 & 68 & 68 & 68 & 69 & 69 & 69 & 70 & 70 & 70 & 71 \\ 
  0.34 & 57 & 61 & 63 & 65 & 66 & 67 & 68 & 68 & 69 & 70 & 70 & 71 & 71 & 72 & 72 & 72 & 73 & 73 & 73 & 73 \\ 
  0.36 & 59 & 63 & 65 & 67 & 68 & 69 & 70 & 71 & 72 & 72 & 73 & 73 & 74 & 74 & 74 & 75 & 75 & 75 & 76 & 76 \\ 
  0.38 & 61 & 65 & 68 & 69 & 70 & 71 & 72 & 73 & 74 & 74 & 75 & 75 & 76 & 76 & 77 & 77 & 77 & 78 & 78 & 78 \\ 
  0.40 & 63 & 67 & 69 & 71 & 72 & 73 & 74 & 75 & 76 & 76 & 77 & 77 & 78 & 78 & 79 & 79 & 79 & 80 & 80 & 80 \\ 
  0.42 & 65 & 68 & 71 & 72 & 74 & 75 & 76 & 77 & 77 & 78 & 78 & 79 & 79 & 80 & 80 & 81 & 81 & 81 & 82 & 82 \\ 
  0.44 & 65 & 70 & 72 & 74 & 75 & 76 & 77 & 78 & 79 & 79 & 80 & 80 & 81 & 81 & 82 & 82 & 82 & 83 & 83 & 83 \\ 
  0.46 & 67 & 71 & 73 & 75 & 76 & 77 & 78 & 79 & 80 & 80 & 81 & 82 & 82 & 82 & 83 & 83 & 84 & 84 & 84 & 85 \\ 
  0.48 & 67 & 72 & 74 & 76 & 77 & 78 & 79 & 80 & 81 & 81 & 82 & 82 & 83 & 83 & 84 & 84 & 85 & 85 & 85 & 85 \\ 
  0.50 & 68 & 72 & 75 & 76 & 78 & 79 & 80 & 81 & 81 & 82 & 83 & 83 & 84 & 84 & 84 & 85 & 85 & 85 & 86 & 86 \\ 
   \hline
\end{tabular}\label{tab3}}

\caption{$t_{\mathrm{mix}}^{(n)}(0.01)$ when $\frac{k}{n} = 0.02, {r \over n} = 0.50, \frac{m}{n} \in \{0.02,0.04,\dots,0.48,0.50\}$}

\end{table}

\section{Acknowledgements}
All authors were supported by NSF grant 1950583 for work done during the 2023 Iowa State Mathematics REU.

\newpage
\printbibliography
\end{document}